%
%
%


\documentclass{amsart}

\usepackage{amssymb}

\usepackage{graphicx}
\usepackage{lscape}


\newtheorem{theorem}{Theorem}[section]
\newtheorem{lemma}[theorem]{Lemma}

\newtheorem{prop}[theorem]{Proposition}

\theoremstyle{definition}
\newtheorem{definition}[theorem]{Definition}

\theoremstyle{remark}
\newtheorem{remark}[theorem]{Remark}

\numberwithin{equation}{section}

\newcommand{\C}{{\mathbb C}}
\newcommand{\R}{{\mathbb R}}
\newcommand{\Q}{{\mathbb Q}}
\newcommand{\Z}{{\mathbb Z}}
\newcommand{\N}{{\mathbb N}}
\newcommand{\F}{{\mathbb F}}
\begin{document}

\title{The Prouhet-Tarry-Escott problem for Gaussian integers}


\author{Timothy Caley}
\address{Department of Pure Mathematics, University of Waterloo, Waterloo,
Ontario, Canada, N2L 3G1}
\email{tcaley@math.uwaterloo.ca}
\thanks{The author would like to thank NSERC and the University of
Waterloo for funding.}

\subjclass[2000]{Primary 11D72, 11Y50; Secondary 11P05}

\date{\today}

\dedicatory{}

\begin{abstract} Given natural numbers $n$ and $k$, with $n>k$, the
Prouhet-Tarry-Escott (\textsc{pte}) problem asks for distinct
subsets of $\Z$, say $X=\{x_1,\ldots,x_n\}$ and
$Y=\{y_1,\ldots,y_n\}$, such that
\[x_1^i+\ldots+x_n^i=y_1^i+\ldots+y_n^i\] for $i=1,\ldots,k$. Many
partial solutions to this problem were found in the late
19th century and early 20th century.

When $n=k-1$, we call a solution $X=_{n-1}Y$ {\it ideal}. This is
considered to be the most interesting case. Ideal solutions have
been found using elementary methods, elliptic curves,
and computational techniques. In 2007, Alpers and Tijdeman gave
examples of solutions to the \textsc{pte} problem over the Gaussian
integers. This paper extends the framework of the problem to this setting.
We prove generalizations of results from the literature, and use this
information along with computational techniques to find ideal solutions to
the \textsc{pte} problem in the Gaussian integers.\end{abstract}

\maketitle

\section{Introduction} The Prouhet-Tarry-Escott problem, or
\textsc{pte} problem for short, is a classical number theoretic
problem: given natural numbers $n$ and $k$, with $k<n$, find
two distinct subsets of $\Z$, say $X=\{x_1,\ldots,x_n\}$ and
$Y=\{y_1,\ldots,y_n\}$, such that
\begin{equation}\label{1.00}\sum_{i=1}^nx_i^j=\sum_{i=1}^ny_i^j\quad
\textrm{ for } j=1,2,\ldots,k.\end{equation} A solution is written
$X=_kY$, and $n$ is its {\it
size} and $k$ is its {\it degree}. The maximal nontrivial case of the
\textsc{pte} problem
occurs when $k=n-1$. A solution in this case, say $X=_{n-1}Y$, is
called {\it ideal}.

For example, $\{0,3,5,11,13,16\}=_5\{1,1,8,8,15,15\}$ is an ideal
\textsc{pte} solution of size $6$ and degree $5$ since
\begin{align*}0+3+5+11+13+16&=48=1+1+8+8+15+15\\
0^2+3^2+5^2+11^2+13^2+16^2&=580=1^2+1^2+8^2+8^2+15^2+15^2\\
0^3+3^3+5^3+11^3+13^3+16^3&=7776=1^3+1^3+8^3+8^3+15^3+15^3\\
0^4+3^4+5^4+11^4+13^4+16^4&=109444=1^4+1^4+8^4+8^4+15^4+15^4\\
0^5+3^5+5^5+11^5+13^5+16^5&=1584288=1^5+1^5+8^5+8^5+15^5+15^5.\end{align*}
Similarly, for $a,b,c,d\in\Z$,
\[\{a+b+d,a+c+d,b+c+d,d\}=_2\{a+d,b+d,c+d,a+b+c+d\},\]  is a family of
\textsc{pte} solutions of size $4$ and degree $2$ due to Goldbach. In
fact, this example was also found by Euler for the case when $d=0$. Many
other elementary solutions can be found in \cite{Dickson}.

The \textsc{pte} problem is interesting because it is an old problem with
both algebraic and analytic aspects, and also has connections to other
problems.  Ideal solutions are especially interesting because of their
connection to problems in theoretical computer science \cite{Borchert2009}
and combinatorics \cite{Hernandez}, a conjecture of Erd\H os and Szekeres
\cite{Maltby,Cipu}, \cite[Chapter 13]{Borweintext}, as well as the
``Easier'' Waring problem, which we discuss below.

Given an integer $k$, the ``Easier'' Waring problem asks for the smallest
$n$, denoted $v(k)$, such that for all integers $m$, there exist integers
$x_1,\ldots,x_n$ such that \[\pm x_1^k \pm\ldots\pm x_n^k=m,\] for any
choices of signs. This problem was posed by E.~M.~Wright as a weakening of
the usual Waring's problem, which allows only addition. Note that $v(k)$
is conjectured to be $O(k)$. For arbitrary $k$, the best known bound is
$v(k)\ll k\log(k)$ \cite[Chapter 12]{Borweintext}, which is derived from
the usual Waring's problem. For small values of $k$, the best bounds for
$v(k)$ are derived from ideal solutions of the {\sc pte} problem. In fact,
these are much better than those which derive from the usual Waring
problem. See again \cite[Chapter 12]{Borweintext} for a full explanation
of the connection between the two problems.

In 1935, Wright \cite{Wright1935} conjectured that ideal solutions should
exist for all $n$. However, it does not appear that this conjecture is
close to being resolved. For $n=2,3,4,5$, complete parametric ideal
solutions are known. For $n=6,7,8$, only incomplete parametric solutions
are known. See  \cite[Chapter 11]{Borweintext} and
\cite{Borwein94,Choudhry2000,Choudhry2003} for further details of these
cases. For $n=10$, infinite inequivalent families of solutions are known
(albeit incomplete) \cite{Smyth}.

For size $9$, only two inequivalent solutions are known. These were found
computationally by P.~Borwein, Lison\v{e}k and Percival \cite{Borwein03}.
Until 2008, there were also only two inequivalent solutions known for size
$n=12$. They were both found computationally, by Kuosa, Myrignac and
Shuwen \cite{Shuwen} and Broadhurst \cite{Broadhurst}. However, in 2008,
Choudhry and Wr\'{o}blewski \cite{Choudhry} found some infinite
inequivalent families of solutions for $n=12$ (again incomplete). Both
infinite families of solutions for sizes $10$ and $12$ arise from rational
points on elliptic curves using a method of Letac's from 1934, which
appears in \cite{Gloden}.

For $n=11$ and $n\geq13,$ no ideal solutions are known.

Analytic methods are no closer to resolving Wright's conjecture. Along the
same lines as the ``Easier'' Waring problem, define $N(k)$ to be the least
$n$ such that the \textsc{pte} problem of degree $k$ has a solution of
size $n$. Much work has been done on obtaining upper bounds for $N(k)$,
for example, see \cite{Hua1982, Melzak, Wright1935} and \cite[Chapter
12]{Borweintext}. The best upper bound is due to Melzak, which is
$N(k)\leq\frac{1}{2}(k^2-3)$ when $k$ is odd, and $N(k)\leq
\frac{1}{2}(k^2-4)$ when $k$ is even. Meanwhile, there is no lower bound
on $N(k)$ that would rule out ideal solutions.

Although the \textsc{pte} problem is traditionally looked at over $\Z$, it
may be viewed over any ring. Alpers and Tijdeman \cite{Alpers} were the
first to consider the \textsc{pte} problem over a ring other than the
integers. Their article discusses
the \textsc{pte} problem over the ring $\Z\times\Z$ and shows that ideal
solutions of size $n$ in this case come from a particular kind of convex
$2n$-gons. Their article also gives an example of a solution to the
\textsc{pte} problem over the Gaussian integers, $\Z[i]$. It further notes
that there does not appear to be any other mention in the literature of
the \textsc{pte} problem in this setting. In \cite{Choudhry2006}, Choudhry
examines the \textsc{pte} problem over the ring of $2\times2$ integer
matrices, $M_2(\Z)$.

Based upon the work of Alpers and Tijdeman, because $\Z[i]$ contains $\Z$, we might expect smaller ideal solutions to the \textsc{pte}
problem in this setting. Therefore, this article examines ideal
solutions to the \textsc{pte} problem over $\Z[i]$. In particular, we view
ideal solutions over $\Z$ as special cases of solutions over $\Z[i]$. We
also describe a computational search for ideal solutions of size $n$ for
$\Z[i]$ for $n\geq8$. This search generalizes the methods of Borwein,
Lison\v{e}k and Percival in \cite{Borwein03}. They performed a computer
search for ideal solutions of size $n=10$ and $n=12$, which took advantage
of an alternative formulation of the problem to reduce the number of
variables. Their search was further optimized by using the arithmetic
properties of ideal solutions.

All the results that are required for the method of Borwein et
al.~generalize sufficiently to $\Z[i]$. We proceed by discussing some
further background from the existing literature in Section
\ref{Backgroundsection}, and then explaining the computational method to
be used Section \ref{Searchsection}. In Section \ref{Divisibilitysection},
we will provide analogues of existing theorems in the literature for the
\textsc{pte} problem over the Gaussian integers, which allow the
computational search to be optimized. Finally in Section
\ref{Resultsection}, we describe the results of a computational search for
ideal solutions for $n=10$ and $n=12$.

For convenience, we state some results in greater generality than
$\Z[i]$. As a general notation, we refer to the \textsc{pte} problem over
the ring $R$ as the $R$-\textsc{pte} problem. Throughout this article, let
$\zeta\in\C$ be an algebraic integer, and let
$\mathcal{O}$ denote the ring of integers of the number field
$\Q(\zeta)$. Note that it is easy to find $\mathcal{O}$-\textsc{pte}
solutions, such as the example found by Goldbach given above. Hence, we
proceed to discuss the \textsc{pte} problem in this general setting.

\section{Background}\label{Backgroundsection}

Solutions to the $\Z$-\textsc{pte} problem satisfy many relations.
Most of them generalize to $\mathcal{O}$ in a completely trivial
way, and can easily be proved using Newton's identities. We list a
few of them. Suppose $X=\{x_1,\ldots,x_n\}$ and
$Y=\{y_1,\ldots,y_n\}$ are subsets of $\mathcal{O}$, and $k\in\N$
with $k\leq n-1$. Then the following relations are equivalent:
\begin{equation}\label{1.00a1}\sum_{i=1}^nx_i^j=\sum_{i=1}^ny_i^j\quad
\textrm{
for } j=1,2,\ldots,k,\end{equation}
\begin{equation}\label{1.00a2}\deg\left(\prod_{i=1}^n(z-x_i)-\prod_{i=1}^n(z-y_i)\right)\leq
n-k-1,\end{equation}
\begin{equation}\label{1.00a3}(z-1)^{k+1}\bigg|
\sum_{i=1}^nz^{x_i}-\sum_{i=1}^nz^{y_i}.\end{equation} Note that
for any $c\in\C$, we have $z^c=e^{c\ln(z)}$. Since
\[\frac{d}{dz}(z^c)=\frac{d}{dz}\left(e^{c\ln(z)}\right)=c\frac{1}{z}e^{c\ln(z)}=cz^{c-1},\]
and
we merely need differentiation for the proof of \eqref{1.00a2}
$\Longleftrightarrow$ \eqref{1.00a3}, we can use this fact
formally. Similarly, since the terms in the sum
$\sum_{i=1}^nz^{x_i}-\sum_{i=1}^nz^{y_i}$ are not, in
general, polynomials, we consider the division in \eqref{1.00a3} to refer
to the
order of the zero at $1$. These relations provide an alternative
formulation for the \textsc{pte} problem.

Note that in particular, the relation \eqref{1.00a1} $\Longleftrightarrow$
\eqref{1.00a2} implies that when $X=_{n-1}Y$,
\[\prod_{i=1}^n(z-x_i)-\prod_{i=1}^n(z-y_i)\] is a constant in
$\mathcal{O}$. This constant plays a significant role in the study
of the \textsc{pte} problem, which we discuss later in Sections
\ref{Searchsection} and \ref{Divisibilitysection}.

Given a solution to the $\mathcal{O}$-\textsc{pte}
problem, we can generate an infinite family of solutions. That is,
if $\{x_1,\ldots,x_n\}$ and $\{y_1,\ldots,y_n\}$ are subsets of
$\mathcal{O}$ with $\{x_1,\ldots,x_n\}=_k\{y_1,\ldots,y_n\}$, then
\begin{equation}\label{1.00a4}\{Mx_1+K,\ldots,Mx_n+K\}=_k\{My_1+K,\ldots,My_n+K\},\end{equation}
for
any $M,K\in\mathcal{O}$. This fact leads us to give the following
definition:

\begin{definition} Let $\Q(\zeta)$ be a number field and $\mathcal{O}$ be its ring of intgers. Suppose $X_1=_kY_1$ and $X_2=_kY_2$. If there exists an affine transformation $f(x)=Mx+K$ with $M,K$ in $\Q(\zeta)$ such
that $f(X_1)=X_2$ and $f(Y_1)=Y_2$, then we say that
$X_1=_kY_1$ and $X_2=_kY_2$ are {\it equivalent}.\end{definition}

The following fact can be used as a criterion for {\sc pte} solutions to be equivalent. It will be useful later.

\begin{prop}\label{equivprop} Suppose $\{x_1,\ldots,x_n\}=_{n-1}\{y_1,\ldots,y_n\}$ and $\{x_1',\ldots,x_n'\}=_{n-1}\{y_1',\ldots,y_n'\}$ are equivalent ideal {\sc pte} solutions via the transformation $f(x)=Mx+K$ where $M,K\in\Q(\zeta)$. If $\prod_{i=1}^n(x-x_i)-\prod_{i=1}^n(x-y_i)=C$ and $\prod_{i=1}^n(x-x_i')-\prod_{i=1}^n(x-y_i')=C'$, then $C'=CM^n$.\end{prop}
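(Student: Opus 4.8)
The plan is to reduce everything to a single affine change of variable, using the fact that for an ideal solution the difference of the two products is already constant. By the equivalence \eqref{1.00a1} $\Longleftrightarrow$ \eqref{1.00a2} in the case $k=n-1$, both $\prod_{i=1}^n(x-x_i)-\prod_{i=1}^n(x-y_i)$ and $\prod_{i=1}^n(x-x_i')-\prod_{i=1}^n(x-y_i')$ are genuinely constant, namely $C$ and $C'$. This is what makes the statement meaningful, and it is the one structural input I would invoke.

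First I would unwind the equivalence hypothesis. Since $f(X_1)=X_2$ and $f(Y_1)=Y_2$ as sets, I may reindex the primed solution so that $x_i'=Mx_i+K$ and $y_i'=My_i+K$ for every $i$. Because each product is symmetric in its roots, this reindexing costs nothing, and I am free to match up the factors one at a time.

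The heart of the argument is then a direct substitution. Replacing the indeterminate $x$ by $Mw+K$, each linear factor collapses as $(Mw+K)-(Mx_i+K)=M(w-x_i)$, so that
\[\prod_{i=1}^n\bigl((Mw+K)-x_i'\bigr)=M^n\prod_{i=1}^n(w-x_i),\]
and identically with $y_i'$ in place of $x_i'$. Subtracting, the primed difference evaluated at $x=Mw+K$ equals $M^n\bigl(\prod_{i=1}^n(w-x_i)-\prod_{i=1}^n(w-y_i)\bigr)=M^nC$.

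Finally, since $C'$ is a constant it takes the value $C'$ at every argument, in particular at $x=Mw+K$ for all $w$; comparing with the previous line gives $C'=CM^n$, as claimed. I do not anticipate a genuine obstacle here. The only point that deserves a word of justification is that $C$ and $C'$ are constants rather than polynomials of positive degree, and this is exactly the content of \eqref{1.00a1} $\Longleftrightarrow$ \eqref{1.00a2} for ideal solutions; the remainder is a one-line factorization.
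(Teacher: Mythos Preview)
Your proof is correct and follows essentially the same route as the paper's: after reindexing so that $x_i'=Mx_i+K$ and $y_i'=My_i+K$, both arguments perform the affine substitution in the variable to pull out the factor $M^n$, the only difference being that the paper does the shift by $K$ and the scaling by $M$ in two separate steps while you combine them into the single substitution $x=Mw+K$. Your explicit remark that the constancy of $C$ and $C'$ is precisely the content of \eqref{1.00a1}~$\Longleftrightarrow$~\eqref{1.00a2} is a nice clarification.
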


\begin{proof} If these solutions are equivalent, without loss generality, we may assume $Mx_i+K=x_i'$ and $My_i+K=y_i'$ for $i=1,\ldots,n$. Thus, we have \[\prod_{i=1}^n(x-(Mx_i+K))-\prod_{i=1}^n(x-(My_i+K))=C',\] and since this holds for all values of $x$, we may replace $x$ by $x+K$ to obtain 
\[\prod_{i=1}^n((x+K)-(Mx_i+K))-\prod_{i=1}^n((x+K)-(My_i+K))=C'.\] Simplifying, dividing through by $M^n$ and then replacing $x/M$ by $x$, we obtain \[\prod_{i=1}^n(x-x_i)-\prod_{i=1}^n(x-y_i)=\frac{C'}{M^n},\] proving the result.\end{proof}

Let $\overline{z}$ denote the complex conjugate of $z$. It is clear that
if $\{x_1,\ldots,x_n\}=_k\{y_1,\ldots,y_n\}$, then
$\{\overline{x}_1,\ldots,\overline{x}_n\}=_k\{\overline{y}_1,\ldots,\overline{y}_n\}$
also.

\section{Searching for Ideal Solutions Computationally}\label{Searchsection}

We might naively search for ideal solutions to the \textsc{pte} problem
over $\Z$ in the following way. Suppose our search space is
$x_i,y_i\in[0,S]\cap\Z$. We may assume $x_1=0$. Then select the remaining
integers so that $0\leq x_2\leq x_3\leq\ldots\leq x_n$ and $1\leq
y_1\leq\ldots\leq y_{n-1}$, and take
$y_n=x_1+\ldots+x_n-(y_1+\ldots+y_{n-1})$. Now check whether or not
\[x_1^k+\ldots+x_n^k=y_1^k+\ldots+y_n^k\] for each $k=2,\ldots,n-1$. This
method requires searching in $2n-1$ variables.

However, Borwein et al.~\cite{Borwein03} improve on this significantly.
Recall from \eqref{1.00a2} that if
$\{x_1,\ldots,x_n\}=_{n-1}\{y_1,\ldots,y_n\}$ is an ideal \textsc{pte}
solution, then
\[(z-x_1)(z-x_2)\cdots(z-x_n)-(z-y_1)(z-y_2)\cdots(z-y_n)=C,\] for some
constant $C\in\Z$. Rearranging this equation and then substituting $z=y_j$
for $j=1,\ldots,n$ we obtain
\begin{equation}\label{comp1.01}(y_j-x_1)\cdots(y_j-x_n)=C.\end{equation}

For any $k\in\{1,\ldots,n\}$, equation \eqref{comp1.01} can be rearranged
to
\begin{equation}\label{comp1.02}\frac{1}{C}(y_j-x_{n-k+2})\cdots(y_j-x_n)=\frac{1}{(y_j-x_1)\cdots(y_j-x_{n-k+1})}.\end{equation}
Now define \[f(z):=\frac{1}{C}(z-x_{n-k+2})\cdots(z-x_n).\] From
\eqref{comp1.02}, we have
$f(y_j)=\frac{1}{(y_j-x_1)\cdots(y_j-x_{n-k+1})}$ for $j=1,\ldots,k$. So
if the variables $x_1,\ldots,x_{n-k+1}$ and $y_1,\ldots,y_k$ are known,
then we also have the ordered pairs $(y_j,f(y_j))$ for $j=1,\ldots,k$. We
may determine $f(z)$ uniquely by using Lagrange polynomials and the
ordered pairs $(y_j,f(y_j))$ for $j=1,\ldots,k$ (see, for example
\cite[Chapter 5]{MCA}). Thus, $f(x)$ is a polynomial of degree $k-1$, and
solving $f(z)=0$ yields its roots, which are $x_{n-k+2},\ldots,x_n$.
Repeating this process gives the remaining $y_{k+1},\ldots,y_n$.

The method of Borwein et al. requires searching through only $n+1$
variables, instead of $2n-1$. This method can clearly be generalized to
any ring of integers $\mathcal{O}$, and this is what we have implemented
for $\mathcal{O}=\Z[i]$.

\subsection{Optimizing the Search}\label{Optimizingsection} In order to
explain how Borwein et al.~further optimize the search over $\Z$, we need
a definition, and in order to state it, we now restrict ourselves to any
$\mathcal{O}$ that is also a unique factorization domain (\textsc{ufd}).
We maintain this restriction for the remainder of this article.

\begin{definition} Suppose $\mathcal{O}$ is a \textsc{ufd}. Suppose
$X=_{n-1}Y$ is a $\mathcal{O}$-\textsc{pte} solution with
$\prod_{i=1}^n(z-x_i)-\prod_{i=1}^n(z-y_i)=C_{n,X,Y}$. Then let
\[C_n:=\gcd\{C_{n,X,Y}|X=_{n-1}Y\}.\] We say that $C_n$ is the {\it
constant associated} with the $\mathcal{O}$-\textsc{{\sc pte}} problem of
size $n$.\end{definition}

Thus, $C_n$ keeps track of all the common factors that appear among the
constants that come from the second formulation of the \textsc{pte}
problem in \eqref{1.00a2}. The requirement that $\mathcal{O}$ is a
\textsc{ufd} is necessary for $C_n$ to be well-defined.

We have the following Theorem, generalized from Proposition 3 in
\cite{Borwein03}, \begin{theorem}\label{1.001} Suppose $\mathcal{O}$ is a
\textsc{ufd}. Let $\{x_1,\ldots,x_n\}=_{n-1}\{y_1,\ldots,y_n\}$ be subsets
of $\mathcal{O}$ that are an ideal $\mathcal{O}$-\textsc{pte} solution.
Suppose that $q\in\mathcal{O}$ is a prime such that
$q\mid C_n$. Then we can reorder the $y_i$ such that \[x_i\equiv
y_i\pmod{q}\qquad\mathrm{for}\,i=1,\ldots,n.\]\end{theorem}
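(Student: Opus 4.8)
The plan is to work with the second formulation of the problem. Since $X=_{n-1}Y$ is ideal, relation \eqref{1.00a1} $\Longleftrightarrow$ \eqref{1.00a2} tells us that
\[\prod_{i=1}^n(z-x_i)-\prod_{i=1}^n(z-y_i)=C_{n,X,Y},\]
a constant in $\mathcal{O}$. By the definition of $C_n$ as a gcd of all such constants, $q\mid C_n$ forces $q\mid C_{n,X,Y}$. Reducing the whole identity modulo $q$ in the polynomial ring $(\mathcal{O}/q\mathcal{O})[z]$, I obtain
\[\prod_{i=1}^n(z-\bar x_i)=\prod_{i=1}^n(z-\bar y_i),\]
where the bars denote reduction mod $q$. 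The goal is then purely a statement about factorizations in this quotient ring: two monic polynomials that are products of linear factors and are equal must have the same multiset of roots, after which I can reorder the $y_i$ accordingly.

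First I would make precise that, because $q$ is prime in the \textsc{ufd} $\mathcal{O}$, the quotient $\mathcal{O}/q\mathcal{O}$ is an integral domain, and hence embeds in its field of fractions $F$. Over the field $F$, the polynomial $\prod_{i=1}^n(z-\bar x_i)$ splits into linear factors, and $F[z]$ is a \textsc{ufd}, so the factorization into monic linear (hence irreducible) factors is unique up to order. Therefore the equality of the two products in $F[z]$ gives a bijection $\sigma$ of $\{1,\dots,n\}$ with $\bar x_i=\bar y_{\sigma(i)}$ for each $i$; that is, $x_i\equiv y_{\sigma(i)}\pmod q$. Reordering the $y_i$ by $\sigma$ yields $x_i\equiv y_i\pmod q$ for all $i$, which is exactly the claim.

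The one genuine point requiring care is the passage from $\mathcal{O}/q\mathcal{O}$ being a domain to having unique factorization of the relevant polynomials, since $\mathcal{O}/q\mathcal{O}$ itself need not be a \textsc{ufD} in a way that handles non-monic or non-split polynomials. The clean fix is to embed in the fraction field $F$ and argue there, as above; the linear factors $z-\bar x_i$ remain monic and their product is unchanged, so no information is lost by this embedding. This is the step I expect to be the main obstacle, precisely because $\mathcal{O}/q\mathcal{O}$ is in general not a field (it is a field exactly when $q$ generates a maximal ideal, which holds for $\Z[i]$ but should not be assumed in the general $\mathcal{O}$ statement), so one cannot directly invoke that a degree-$n$ polynomial over a field has at most $n$ roots without first securing the domain property and passing to $F$.

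Once the factorization argument is set up, the remainder is routine: primality of $q$ gives the domain property, the embedding into $F$ gives unique factorization, matching the monic linear factors gives the permutation $\sigma$, and renaming the $y_i$ completes the proof.
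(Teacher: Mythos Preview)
Your proof is correct and follows essentially the same line as the paper's: reduce the identity $\prod_{i=1}^n(z-x_i)-\prod_{i=1}^n(z-y_i)=C_{n,X,Y}$ modulo $q$ and then match monic linear factors by unique factorization over a field. The one place you diverge is the step you flagged as the ``main obstacle'': you worry that $\mathcal{O}/q\mathcal{O}$ need not be a field and therefore pass to its fraction field. In the paper's standing setup, however, $\mathcal{O}$ is the ring of integers of a number field, so every nonzero prime ideal is maximal and $\mathcal{O}/q\mathcal{O}$ is already a (finite) field; the paper invokes exactly this fact and works directly in $\F_q[z]$. Your detour through the fraction field is thus unnecessary in context, but it is harmless and has the mild advantage that it would carry the argument through for an arbitrary \textsc{ufd} $\mathcal{O}$ rather than only for rings of integers.
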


The proof of Theorem \ref{1.001} follows that of Proposition 3 from
\cite{Borwein03}, but we repeat it for completeness.

\begin{proof} Assume $q\in\mathcal{O}$ is a prime dividing $C_n$. Since
$\mathcal{O}$ is an integral domain and $q$ is prime, $\langle q\rangle$
is a prime ideal. Since prime ideals of rings of integers of number fields
are also maximal (see, for example \cite{Stewart}), the quotient
$\mathcal{O}/\langle q\rangle$ is a field. Let $\F_q$ denote this field.
It follows that $\prod_{i=1}^n(z-x_i)-\prod_{i=1}^n(z-y_i)$ equals a
constant
times $q$, and so is zero in $\F_q[z]$. Hence,
$\prod_{i=1}^n(z-x_i)=\prod_{i=1}^n(z-y_i)$ in $\F_q[z]$. Since $\F_q$ is
a field,
the polynomial ring $\F_q[z]$ is a unique factorization domain.
Since each of the factors $z-x_i$ and $z-y_i$ are irreducible, it
follows that the sets $\{x_1,\ldots,x_n\}$ and
$\{y_1,\ldots,y_n\}$ are equal as subsets of $\F_q$. That is, they
are equal modulo $q$, as desired.\end{proof}

Borwein et al.~\cite{Borwein03} use Theorem \ref{1.001} to optimize the
search for ideal solutions over $\Z$. This can also be applied over
$\mathcal{O}$. Suppose $q_1,q_2$ are the two largest primes (in $\mathcal{O}$) dividing $C_n$. Assume $x_1=0$, and pick the rest of the variables so that for
$i=1,\ldots,n$ \begin{align*}x_i\equiv y_i\pmod{q_1}\\
(x_{i+1}-y_i)\cdot\sum_{j=1}^i(x_j-y_j)\equiv0\pmod{q_2}.\end{align*}

We assume the solutions $x_i$ and $y_i$ pair modulo $q_1$, and that each
$x_i$ pairs to the previous $y_i$ modulo $q_2$, unless all the $x_j$ and
$y_j$ are already paired off modulo $q_2$. Hence, every prime $q$ that
divides $C_n$ reduces the search space in each variable by a factor of
$N(q)$, where $N(q)$ denotes the algebraic norm of $q$. Therefore,
divisibility results, particularly large prime factors, for $C_n$ are very
important for optimizing the search.

\section{Divisibility Results for $C_n$}\label{Divisibilitysection} There
are a number of results in the literature concerning divisibility of $C_n$
for the $\Z$-\textsc{pte} problem. For example, about half of the article
by Rees and Smyth \cite{Rees} is spent proving such results. Many of
these results generalize immediately to $\mathcal{O}$, which we state
below without proof. In the case where the result is more of an analogy
than a generalization, we provide a proof.

The usual method of generalization is to view arithmetic modulo a prime
power in $\Z$ as analogous to arithmetic in the appropriate finite field,
which is then viewed as analogous to arithmetic modulo the algebraic norm
of a prime in $\mathcal{O}$. Fermat's Little Theorem corresponds with
Lagrange's Theorem and so on. This method was used in the proof of Theorem
\ref{1.001} in the previous section.

The next two results are generalizations of Proposition 2.3 and
Proposition 3.1 in \cite{Rees}, respectively.
\begin{theorem}\label{1.spec05} Suppose $\mathcal{O}$ is a UFD. Let
$q\in\mathcal{O}$ be a prime with $N(q)>3$. Then $N(q)\mid
C_{N(q)}$.\end{theorem}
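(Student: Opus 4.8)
The plan is to pass to the residue field and translate the statement into one about power sums. Write $n=N(q)$, and let $\F_q=\mathcal{O}/\langle q\rangle$, which by the argument in the proof of Theorem~\ref{1.001} is a field; since $q$ is prime it has exactly $N(q)$ elements. The analogue of Fermat's little theorem here is Lagrange's theorem: every $\alpha\in\F_q$ satisfies $\alpha^{N(q)}=\alpha$, equivalently $\prod_{a\in\F_q}(z-a)=z^{N(q)}-z$ in $\F_q[z]$. Because $\{x_1,\ldots,x_n\}=_{n-1}\{y_1,\ldots,y_n\}$ is ideal, Newton's identities show that the power sums, and hence the elementary symmetric functions, of the $x_i$ and of the $y_i$ agree through index $n-1$; thus $\prod_{i=1}^n(z-x_i)$ and $\prod_{i=1}^n(z-y_i)$ differ only in their constant terms, and a short computation gives $\sum_{i=1}^n x_i^{n}-\sum_{i=1}^n y_i^{n}=-n\,C$, where $C=C_{n,X,Y}$. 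In particular $N(q)\mid C$ is equivalent to $N(q)^2\mid \sum x_i^{N(q)}-\sum y_i^{N(q)}$, so it suffices to control this power-sum difference modulo $N(q)^2$.

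First I would extract the leading-order information. Reducing modulo $q$ and using $\alpha^{N(q)}\equiv\alpha\pmod q$ together with $\sum x_i=\sum y_i$ shows at once that $q$ divides $\sum x_i^{N(q)}-\sum y_i^{N(q)}$; moreover, when the residues $\bar x_1,\ldots,\bar x_n$ happen to be distinct they exhaust $\F_q$, so $\prod(z-\bar x_i)=z^{N(q)}-z$, whence $\prod(z-\bar y_i)=z^{N(q)}-z-\bar C$ can have a root in $\F_q$ only if $\bar C=0$, giving $q\mid C$. To reach the full conclusion I would then reduce the \emph{exact} equalities $\sum x_i^{j}=\sum y_i^{j}$ for $1\le j\le n-1$ modulo $q^2$, expand each $x_i$ and $y_i$ about its residue, and use these relations to control the ``Fermat-quotient'' terms $(\alpha^{N(q)}-\alpha)/q$; this is the computation that forces the required divisibility and eliminates the degenerate residue patterns. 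For a split prime $q$ I would finish by applying the same bound to the conjugate solution $\{\overline{x}_i\}=_{n-1}\{\overline{y}_i\}$, which promotes $q\mid C$ to $q\overline{q}=N(q)\mid C$.

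The main obstacle is that the first-order reduction is essentially tautological: on its own it only reproduces $N(q)\mid N(q)\,C$. Worse, a monic polynomial of degree $N(q)$ and its translate by a nonzero constant can both split completely over $\F_q$ --- for instance $z^{N(q)}$ and $(z-c)^{N(q)}$ differ by the constant $c^{N(q)}=c\neq 0$ --- so one cannot conclude $\bar C=0$ from the factorization modulo $q$ alone. The substantive work is therefore the second-order analysis: excluding these degenerate patterns, and, for an inert prime, pushing the $q$-adic valuation of $C$ all the way up to $v_q(N(q))$, by exploiting the full system of lower power-sum identities modulo a suitable power of $q$. This is the technical heart of the corresponding argument of Rees and Smyth, and the point requiring care is checking that it survives the passage from congruences modulo a prime power in $\Z$ to arithmetic in $\F_q$ and divisibility by the norm of a prime in $\mathcal{O}$.
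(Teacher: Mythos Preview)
The paper does not actually prove Theorem~\ref{1.spec05}: it is listed among the results that ``generalize immediately to $\mathcal{O}$, which we state below without proof,'' and is attributed to Proposition~2.3 of Rees--Smyth. So your sketch is already more detailed than what the paper offers, and in spirit the two agree: both point to the Rees--Smyth argument and claim it transfers once one replaces arithmetic modulo a rational prime power by arithmetic in the residue field $\F_q=\mathcal{O}/\langle q\rangle$ and divisibility by $N(q)$.

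That said, your write-up is a proof \emph{plan}, not a proof, and you are candid about this. Two remarks on the plan itself. First, between the extreme cases you mention (all $\bar x_i$ distinct, versus the degenerate patterns) there is a clean reduction you do not record: from $\bar F-\bar G=\bar C$ one gets $\bar F'=\bar G'$, and if $\bar C\neq 0$ the root sets of $\bar F$ and $\bar G$ are disjoint, so counting multiplicities of $\bar F'=\bar G'$ at both sets gives
\[
\deg \bar F' \;\ge\; \bigl(n-\#\{\text{roots of }\bar F\}\bigr)+\bigl(n-\#\{\text{roots of }\bar G\}\bigr)\;\ge\; n,
\]
contradicting $\deg \bar F'\le n-2$ unless $\bar F'=0$. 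This forces $\bar F,\bar G\in\F_q[z^p]$ and, after taking $p$-th roots via Frobenius, reduces to a lower-degree instance; in the base case $n=p$ one is left exactly with your degenerate pattern $\bar X=\{a,\ldots,a\}$, $\bar Y=\{b,\ldots,b\}$. This step is where the structure of $\F_q$ enters and is probably what you want instead of the ad~hoc ``distinct residues'' observation.

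Second, the residual case really does require the second-order (mod~$q^2$) analysis you describe, and this is also where the hypothesis $N(q)>3$ is used: one feeds the exact equalities $\sum x_i^k=\sum y_i^k$ for several small $k$ into the expansion $x_i=\tilde a+qu_i$, $y_i=\tilde b+qv_i$, and only with enough equations does a contradiction to $a\neq b$ emerge. Your description of this step (``control the Fermat-quotient terms'') is accurate in outline but does not carry it out; since that computation is the content of the Rees--Smyth proof, your proposal ultimately rests on the same appeal the paper makes. Finally, your handling of the split/inert dichotomy is correct: the conjugate-solution trick upgrades $q\mid C$ to $N(q)\mid C$ when $q$ is split, while for inert $q$ one genuinely needs $v_q(C)\ge v_q(N(q))$, which is the harder assertion.
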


\begin{theorem}\label{spec00} Suppose $\mathcal{O}$ is a UFD. Let
$q\in\mathcal{O}$ be a prime such that \[n+3\leq
N(q)<n+3+\frac{n-2}{6}.\] Then $q\mid C_{n+1}$.\end{theorem}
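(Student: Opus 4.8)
The plan is to show that $q\mid C_{n+1,X,Y}$ for every ideal solution $\{x_1,\dots,x_{n+1}\}=_n\{y_1,\dots,y_{n+1}\}$ of size $n+1$; since $C_{n+1}$ is the $\gcd$ of the constants $C_{n+1,X,Y}$, this gives $q\mid C_{n+1}$. Write $C=C_{n+1,X,Y}$ and suppose, for contradiction, that $q\nmid C$. Exactly as in the proof of Theorem~\ref{1.001}, reduce modulo $q$ to the field $\F_q=\mathcal O/\langle q\rangle$, which has $N(q)$ elements and characteristic equal to the rational prime $p$ below $q$. Evaluating $\prod_i(z-x_i)-\prod_i(z-y_i)=C$ at $z=y_j$ gives $\prod_i(y_j-x_i)=C$ (this is \eqref{comp1.01}); reducing and using $q\nmid C$ shows $\overline y_j\ne\overline x_i$ for all $i,j$, so the residue sets $A=\{\overline x_i\}$ and $B=\{\overline y_i\}$ are disjoint. (When $q\mid C$ the same computation forces $A=B$, recovering Theorem~\ref{1.001}.) For $a\in\F_q$ let $\mu_a$ be the number of $x_i$ congruent to $a$ minus the number of $y_i$ congruent to $a$; then $\mu$ is supported on $A\sqcup B$, is positive on $A$ and negative on $B$, satisfies $|\mu_a|\le n+1$ and $\sum_a|\mu_a|=2(n+1)$, and the solution equations $\sum_i x_i^{\,j}=\sum_i y_i^{\,j}$ for $1\le j\le n$, together with $|X|=|Y|=n+1$, reduce to the moment conditions $\sum_a\mu_a a^{\,j}=0$ for $j=0,1,\dots,n$.

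The decisive step is to identify the shape of any such $\mu$. Summing partial fractions over $\F_q$ and clearing denominators by $\prod_{a\in\F_q}(X-a)=X^{N(q)}-X$, I obtain $\sum_a\frac{\mu_a}{X-a}=\frac{M(X)}{X^{N(q)}-X}$, where the vanishing of the moments $0,\dots,n$ forces $\deg M\le N(q)-(n+2)=:t$. Taking the residue at $X=a$ and using $(X^{N(q)}-X)'=-1$ in characteristic $p$ gives the clean formula $\mu_a=-M(a)$ with $\deg M\le t$. By hypothesis $t$ is small: $1\le t<1+\tfrac{n-2}{6}$. Moreover each $\mu_a$ is the reduction of a rational integer, hence lies in the prime subfield $\F_p\subseteq\F_q$, so $M(\F_q)\subseteq\F_p$; this makes the case $N(q)=p^f$ with $f\ge 2$ badly over-determined, and the essential case is $N(q)=p$ prime, where $\F_p=\Z/p$ and the integers $\mu_a$ are representatives of the classes $-M(a)$ of absolute value at most $n+1$.

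Now I would run a height count in the prime case. Because $M$ has degree $\le t$, each element of $\F_p$ is a value of $M$ at most $t$ times; hence, writing $\|\cdot\|$ for the least-absolute-value representative, at most $2t$ of the residues $a$ can satisfy $\|M(a)\|=k$ (the two classes $\pm k$). Since $\mu$ has exactly $|A|+|B|\ge n+2$ nonzero values and $|\mu_a|\ge\|M(a)\|$, distributing these across the height levels at most $2t$ per level gives $2(n+1)=\sum_a|\mu_a|\ge\sum_a\|M(a)\|\gg (n+2)^2/t$. This is incompatible with $t$ being small, and the contradiction shows $q\mid C$. This is the mechanism behind Proposition~3.1 of \cite{Rees}, and it is what expels $N(q)$ from the interval $[\,n+3,\ n+3+\tfrac{n-2}{6})$.

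The main obstacle is quantitative. The crude packing just described already forces $N(q)\gg n$, but only with a constant weaker than the $\tfrac16$ demanded by the theorem, because it treats the values of $M$ as if they could be packed freely into the small classes. The sharp statement must instead exploit that $\mu=-M$ comes from a \emph{single} polynomial of degree $\le t$, whose value distribution is rigid, together with the two separate constraints $\sum_{a\in A}\mu_a=n+1$ and $\sum_{a\in B}(-\mu_a)=n+1$ coming from the two sides; extracting the exact constant $\tfrac16$ from this rigidity is the delicate heart of the argument, as carried out in \cite{Rees}. A secondary difficulty, typical of these characteristic-$p$ reductions, is to dispose of the boundary cases separately — non-prime residue fields (controlled by the prime-subfield remark), a constant or very low degree polynomial $M$, and the extremal configurations with $|A|+|B|=n+2$ — which must be checked by hand.
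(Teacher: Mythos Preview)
Your setup is the Rees--Smyth argument, and that is exactly what the paper invokes: the paper gives no self-contained proof of this theorem, but simply asserts that Proposition~3.1 of \cite{Rees} and its supporting ``Multiplicity Lemma'' carry over to $\mathcal{O}$ verbatim. Your reduction---disjointness of $A$ and $B$ from $q\nmid C$, the moment conditions $\sum_a\mu_a a^{j}=0$ for $0\le j\le n$, and the identification $\mu_a=-M(a)$ with $\deg M\le t=N(q)-n-2$---is precisely that setup. The step you explicitly cannot finish, pushing the constant in your packing bound from roughly $\tfrac18$ down to $\tfrac16$, is exactly what the Multiplicity Lemma of \cite{Rees} supplies. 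So you and the paper are on the same path and defer the same quantitative core to \cite{Rees}; your write-up is more explicit about the mechanism but no more and no less complete than the paper's own treatment.

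Two small cautions on points you gloss over. First, the inequality $|A|+|B|\ge n+2$ that you use is not automatic from disjointness; in the prime case it follows because $M$ has at most $t$ roots and $p-t=n+2$, but you should say so. Second, your dismissal of the non-prime residue field case as ``badly over-determined'' is too quick: when $N(q)=p^{f}$ with $f\ge 2$ one has $p\le\sqrt{N(q)}$, which can be much smaller than $n+1$, so the integers $\mu_a$ are no longer pinned down by their residues in $\F_p$ and the inequality $|\mu_a|\ge\|M(a)\|$ loses its force. The constraint $M(\F_q)\subseteq\F_p$ with $\deg M\le t$ does eventually force $M$ to be essentially constant when $t<p^{\,f-1}$, but turning that into a contradiction with $\sum_a\mu_a=0$ and the sign pattern on $A\sqcup B$ needs an extra line of argument, not just the phrase ``over-determined''.
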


Note that Rees and Smyth use a ``Multiplicity Lemma'' to prove this result
in \cite{Rees}. The proof of this lemma also generalizes appropriately to
$\mathcal{O}$, and so Theorem \ref{spec00} is remains valid.

We now prove a general divisibility result of $C_n$ for powers of primes
$q$. This result is based on the same techniques used in Proposition 2.4
of \cite{Rees}.

\begin{prop}\label{spec001} Suppose $\mathcal{O}$ is a UFD, and
$q\in\mathcal{O}$ is a prime. If $q\mid C_n$, then
\[q^{\left\lceil\frac{n}{N(q)}\right\rceil}\Big| C_n,\] where $\lceil x
\rceil$ denotes the smallest integer greater than $x$.\end{prop}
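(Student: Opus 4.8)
The plan is to prove the stronger \emph{pointwise} statement that $q^{\lceil n/N(q)\rceil}$ divides the individual constant $C_{n,X,Y}$ for \emph{every} ideal solution $X=_{n-1}Y$. Since $C_n$ is by definition the greatest common divisor of all such constants, this immediately yields $q^{\lceil n/N(q)\rceil}\mid C_n$. So I would fix an arbitrary ideal solution $\{x_1,\ldots,x_n\}=_{n-1}\{y_1,\ldots,y_n\}$ and write $C:=C_{n,X,Y}$ for its associated constant.

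The first step is to invoke Theorem \ref{1.001}: since $q\mid C_n$ by hypothesis, we may reorder the $y_i$ so that $x_i\equiv y_i\pmod{q}$ for every $i=1,\ldots,n$. The second step is to exploit the substitution identity \eqref{comp1.01}. Substituting $z=y_j$ into $\prod_{i=1}^n(z-x_i)-\prod_{i=1}^n(z-y_i)=C$ annihilates the second product (it contains the factor $y_j-y_j$), giving $C=\prod_{i=1}^n(y_j-x_i)$ for \emph{each} $j=1,\ldots,n$. This exhibits the single constant $C$ as a product of $n$ factors for any choice of $j$, so the task reduces to counting how many of these factors are divisible by $q$.

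For the counting step I would pass to the residue field $\F_q=\mathcal{O}/\langle q\rangle$, which—exactly as in the proof of Theorem \ref{1.001}—is a field with precisely $N(q)$ elements. The reductions $\overline{y}_1,\ldots,\overline{y}_n$ are $n$ elements of this $N(q)$-element field, so by the pigeonhole principle some residue class is attained by at least $\lceil n/N(q)\rceil$ of them. Choosing $j$ so that $\overline{y}_j$ is such a most-frequently attained class, every index $i$ with $\overline{y}_i=\overline{y}_j$ satisfies $y_j-x_i\equiv y_j-y_i\equiv 0\pmod{q}$, where the first congruence uses $x_i\equiv y_i$ from the first step. Hence at least $\lceil n/N(q)\rceil$ of the factors in $C=\prod_{i=1}^n(y_j-x_i)$ are divisible by $q$, so $q^{\lceil n/N(q)\rceil}\mid C$, as required.

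Each step is essentially routine, and the only point needing care is the combinatorial bookkeeping: I must ensure the pigeonhole bound genuinely delivers $\lceil n/N(q)\rceil$ (not one less), and that the reordering of the $y_i$ furnished by Theorem \ref{1.001} is carried out \emph{before}, and remains consistent with, the pigeonhole selection of the index $j$. I expect no serious obstacle beyond verifying this bookkeeping.
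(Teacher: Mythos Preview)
Your argument is correct and follows essentially the same route as the paper: invoke Theorem~\ref{1.001} to align $x_i\equiv y_i\pmod q$, apply pigeonhole on the $N(q)$ residue classes to find a class hit by at least $\lceil n/N(q)\rceil$ of the $y_i$, and then read off that many $q$-divisible factors in a product expression for $C_{n,X,Y}$. The only cosmetic difference is that the paper first translates the solution by $-b_1$ so that $C_{n,X,Y}=a_1\cdots a_n$ and counts the $a_i\equiv0$, whereas you substitute $z=y_j$ directly and count the factors $y_j-x_i\equiv0$; these are the same computation up to a shift.
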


\begin{proof} Suppose $X=\{a_1,\ldots,a_n\}$ and $Y=\{b_1,\ldots,b_n\}$
with $X=_{n-1}Y$, and $q\mid C_{n,X,Y}$. From Theorem \ref{1.001},
we can relabel the $a_i$ and $b_j$ such that $a_i\equiv
b_i\pmod{q}$ for $i=1,\ldots,n$. Note that $\mathcal{O}$ has
$N(q)$ congruence classes modulo $q$, and so there is at least one
congruence class with at least $\lceil n/N(q)\rceil$ elements from
the set $\{b_1,\ldots,b_n\}$. Relabel this set so that
$b_1,\ldots,b_{\lceil\frac{n}{N(q)}\rceil}$ are in the same
congruence class modulo $q$. From equation \eqref{1.00a4}, we can
shift the $a_i$ and $b_i$ by $-b_1$, giving
$C_{n,X,Y}=a_1a_2\cdots a_n$. Then
\begin{align*}a_1&\equiv b_1\equiv 0\pmod{q}\\
a_2&\equiv b_2\equiv 0\pmod{q}\\
&\vdots\\
a_{\lceil\frac{n}{N(q)}\rceil}&\equiv
b_{\lceil\frac{n}{N(q)}\rceil}\equiv 0\pmod{q}.
\end{align*} Thus,
$q^{\lceil\frac{n}{N(q)}\rceil}\mid C_{n,X,Y}$, and since $X$ and
$Y$ were arbitrary, we have proved the result.\end{proof}

Note that we can only apply Proposition \ref{spec001} when we already have
from another source that $p\mid C_n$.

We now prove a specific result for the divisibility of $C_5$ for
powers of primes $q\in\mathcal{O}$, with $N(q)=2$. This result is
based on the same techniques used in Proposition 2.5 of
\cite{Rees}.

\begin{prop} Suppose $q\in\mathcal{O}$ is prime with $N(q)=2$. Then
$q^4\mid C_5$.\end{prop}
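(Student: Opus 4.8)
Let me understand what we're proving. We have a prime $q \in \mathcal{O}$ with $N(q) = 2$, and we want $q^4 \mid C_5$. So we're looking at ideal PTE solutions of size $n=5$.

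First, I should figure out whether $q \mid C_5$ at all, since Proposition \ref{spec001} only gives the power boost *after* we know $q \mid C_n$. With $n=5$ and $N(q)=2$, Proposition \ref{spec001} would give $q^{\lceil 5/2 \rceil} = q^3 \mid C_5$. But we want $q^4$, which is stronger. So we need a refinement specific to this case.

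**The approach from Rees-Smyth.**

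The result is based on Proposition 2.5 of Rees-Smyth. Let me recall the counting argument. We have $a_i \equiv b_i \pmod{q}$ after reordering (Theorem \ref{1.001}). There are $N(q) = 2$ congruence classes mod $q$. After shifting by $-b_1$ (using \eqref{1.00a4}), we get $C_{5,X,Y} = a_1 a_2 \cdots a_5$.

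The key idea: we need to count carefully how many $a_i \equiv 0 \pmod q$. Proposition \ref{spec001}'s crude count gives $\lceil 5/2 \rceil = 3$. To get $4$, I need to use more structure—specifically the relation $\sum x_i = \sum y_i$ (the degree-1 PTE condition).

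**My proof plan.**

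With only 2 classes mod $q$, split $\{b_1,\ldots,b_5\}$ into those $\equiv 0$ and those $\equiv 1 \pmod q$ (after the shift making $b_1 \equiv 0$). Say $r$ of them are $\equiv 0$ and $5-r$ are $\equiv 1$. Since $a_i \equiv b_i$, the same split holds for the $a_i$. The product $C = a_1\cdots a_5$ picks up one factor of $q$ from each $a_i \equiv 0$, giving at least $q^r$.

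By pigeonhole $r \geq 3$. I need to rule out $r = 3$, forcing $r \geq 4$. Suppose $r = 3$: three values $\equiv 0$ and two $\equiv 1 \pmod q$. Here's where I'd use the PTE relations more deeply—perhaps looking mod $q^2$. The two $a_i$'s that are $\equiv 1 \pmod q$ contribute units, so I'd examine whether one of the three "zero" values is actually divisible by $q^2$, OR whether the congruence structure mod $q^2$ forces a fourth factor. I suspect the argument uses the full set of conditions $\sum a_i^j = \sum b_i^j$ for $j=1,2,3,4$ reduced mod $q^2$, together with the fact (from the proof of Theorem \ref{1.001}) that the $a_i$ and $b_i$ agree as multisets mod $q$, to derive a contradiction with $r=3$.

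\textbf{Main obstacle.} The hard part is the case analysis ruling out $r=3$: showing that when exactly three elements lie in the zero class mod $q$, the PTE relations (read modulo $q^2$) force an additional factor of $q$, bringing the total to $q^4$. I expect this requires writing $a_i = q a_i'$ for the three divisible values and $b_i = q b_i'$ correspondingly, substituting into the power-sum identities, and extracting a congruence that either promotes one $a_i'$ to be divisible by $q$ or reveals a structural obstruction. Since $N(q)=2$ means $\mathcal{O}/q \cong \F_2$, the arithmetic mod $2$ is very rigid (every element is $0$ or $1$), which should make the modular bookkeeping tractable but requires careful tracking of the quadratic terms. I would carry out this computation explicitly, likely mirroring the Rees-Smyth argument line by line, adapting their $\Z$-reasoning to the residue field $\F_2 = \mathcal{O}/\langle q\rangle$ as described in the paper's general translation principle.
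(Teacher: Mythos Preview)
Your setup is right: reduce to the case where exactly three of the $a_i$ lie in the zero class mod $q$, shift so $b_1=0$ and $C_{5,X,Y}=a_1a_2a_3a_4a_5$, and assume for contradiction that each of $a_1,a_2,a_3$ is divisible by $q$ but not $q^2$. (Minor wording issue: you do not ``rule out $r=3$''; the split $3+2$ can and does occur. What you rule out is $r=3$ together with $q^4\nmid C$.)

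The gap is in your plan for the contradiction. Working only with the power sums $p_j(a)=p_j(b)$ modulo $q^2$ does not close the argument. Since $N(q)=2$ we have $2\equiv 0\pmod q$, so for any $x\equiv 1\pmod q$ one has $x^2\equiv 1$ and $x^3\equiv x\pmod{q^2}$; thus $p_2$ and $p_4$ give nothing, while $p_1$ and $p_3$ together yield only
\[
b_2+b_3\equiv q,\qquad a_4+a_5\equiv b_4+b_5\pmod{q^2},
\]
and $e_2$ adds $a_4a_5\equiv b_4b_5\pmod{q^2}$. None of this is a contradiction: for instance $b_2\equiv 0$, $b_3\equiv q\pmod{q^2}$ is perfectly consistent with everything you can extract from power sums at this level. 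Going to $q^3$ introduces the unknown residues of $a_1,a_2,a_3$ modulo $q^3$, so the system becomes underdetermined.

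What the paper (following Rees--Smyth) actually does is exploit the \emph{product} identity
\[
(z-a_1)\cdots(z-a_5)-z(z-b_2)(z-b_3)(z-b_4)(z-b_5)=C_{5,X,Y}
\]
by substituting well-chosen values of $z$. Setting $z=a_1$ and reading off $q$-adic valuations (using $v_q(C)=3$) forces $b_2\equiv b_3\equiv 0\pmod{q^2}$. Setting $z=a_4$ forces, say, $a_4\equiv b_4\pmod{q^2}$, and then $z=b_5$ forces $a_5\equiv b_5\pmod{q^2}$. Only now does the linear relation $\sum a_i=\sum b_i$ do its work: modulo $q^2$ it reads $3q\equiv 0$, i.e.\ $q\equiv 0\pmod{q^2}$, the desired contradiction. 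The substitution trick is the missing idea in your proposal; power sums alone will not reach it.
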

\begin{proof} Suppose $X=\{a_1,\ldots,a_5\}$ and $Y=\{b_1,\ldots,b_5\}$
with $X=_{4}Y$. As in the proof of Proposition \ref{spec001}, we
can relabel the $a_i$ and $b_j$ such that $a_i\equiv b_i\pmod{q}$
for $i=1,\ldots,5$, and so that $b_1,\ldots,b_3$ are in the same
congruence class modulo $q$. Again as above, we can shift the
$a_i$ and $b_i$ by $-b_1$, giving $C_{5,X,Y}=a_1a_2a_3a_4a_5$.
Assume that $q^4\nmid C_{5,X,Y}$. Since we know that $q^3\mid
C_{5,X,Y}$ however, we can assume that $a_1\equiv a_2\equiv
a_3\equiv q\pmod{q^2}$ and $a_4\equiv a_5\equiv 1\pmod{q}$. As
usual, we have
\begin{equation}\label{1.07}(z-a_1)(z-a_2)(z-a_3)(z-a_4)(z-a_5)-
z(z-b_2)(z-b_3)(z-b_4)(z-b_5)=C_{5,X,Y}.\end{equation}
Substituting $z=a_1$ into \eqref{1.07} gives
\[-a_1(a_1-b_2)(a_1-b_3)(a_1-b_4)(a_1-b_5)=C_{5,X,Y}.\] Since
$a_1,a_1-b_2,a_1-b_3$ are all equivalent to $0$ modulo $q$, while
$a_1-b_4,a_1-b_5$ are both equivalent to $1$ modulo $q$ and their
product is not divisible by $q^4$, we must have $a_1\equiv
a_1-b_2\equiv a_1-b_3\equiv q\pmod{q^2}$. Since $a_1\equiv
q\pmod{q^2}$ already, this means that $b_2\equiv b_3\equiv
0\pmod{q^2}$.

We now substitute $z=a_4$ into \eqref{1.07} giving
\[-a_4(a_4-b_2)(a_4-b_3)(a_4-b_4)(a_4-b_5)=C_{5,X,Y}.\] Since
$a_4,a_4-b_2,a_4-b_3$ are all equivalent to $1$ modulo $q$, while
$a_4-b_4,a_4-b_5$ are both equivalent to $0$ modulo $q$, we can
assume, without loss of generality, that $a_4-b_5\equiv
q\pmod{q^2}$ and $a_4-b_4\equiv q^2\pmod{q^3}$, i.e.,
$a_4-b_4\equiv 0\pmod{q^2}$.

Finally, substituting $x=b_5$ into \eqref{1.07} gives
\[(b_5-a_1)(b_5-a_2)(b_5-a_3)(b_5-a_4)(b_5-a_5)=C_{5,X,Y}.\] Only
$b_5-a_4$ and $b_5-a_5$ are equivalent to $0$ modulo $q$. However,
we already have that $a_4-b_5\equiv q\pmod{q^2}$, and so we must
have $a_5-b_5\equiv 0\pmod{q^2}$. However, we have
\begin{align*}0&=a_1+a_2+a_3+a_4+a_5-(b_2+b_3+b_4+b_5)\\
&\equiv q+q+q+b_4+b_5-0-0-b_4-b_5\equiv q\pmod{q^2},\end{align*}
which is a contradiction, proving the proposition.\end{proof}

Not all results from the literature concerning $C_n$ generalize to $\Z[i]$
or $\mathcal{O}$, and some must be addressed specifically depending on the
ring of integers involved. Those relevant to our computer search for ideal
solutions over $\Z[i]$ are discussed next.

\section{Divisibility Results for $C_n$ over $\Z[i]$}\label{divgauss}

An important divisibility result for $C_n$ over $\Z$ is that
$n!\mid C_{n+1}$ (see Proposition 2.1 in \cite{Rees}, originally
due to H.~Kleiman in \cite{Kleiman}). This fact demonstrates that $C_n$ is
highly composite and will contain some large prime factors. The proof that
Rees and Smyth provide of Proposition 2.1 in \cite{Rees} uses the obvious
fact that if $t\in\Z$ then $t(t+1)(t+2)\ldots(t+n)\equiv 0\pmod{(n+1)!}$.
However, this depends on $t$ being an integer. Unfortunately, this fact
does not fully generalize to $\Z[i]$. Nevertheless, we are able to state an
analogous lemma below. For completeness, we prove this result in greater generality than necessary, that is, for the ring of integers of an arbitrary quadratic number field.

We first recall some facts concerning quadratic number fields from Chapter 5 of \cite{Cohen}. Let $K=\Q(\sqrt{d})$ be a quadratic field with $d\neq1$ squarefree and let $D=d(K)$ denote the discriminant of $K$, and let $\mathcal{O}$ be its ring of integers. 
We also assume that $\mathcal{O}$ is a \textsc{ufd}, but note that this hypothesis is not required for Propositions \ref{intbases} and \ref{primedecomp} or Lemma \ref{1.003}. We have the following results: 

\begin{prop}\label{intbases}\begin{itemize} \item[(i)] If $d\equiv 1\pmod{4}$, then $\{1,\frac{1+\sqrt{d}}{2}\}$ is an integral basis for $\mathcal{O}$ and $D=d$.

\item[(ii)] If $d\equiv 2\textrm{ or }3\pmod{4}$, then $\{1,\sqrt{d}\}$ is an integral basis for $\mathcal{O}$ and $D=4d$.\end{itemize}\end{prop}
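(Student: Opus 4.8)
The plan is to determine $\mathcal{O}$ directly from the definition, by deciding exactly which elements of $K=\Q(\sqrt{d})$ are algebraic integers, and then to compute the discriminant from the resulting integral basis. Every element of $K$ has the form $\alpha = a + b\sqrt{d}$ with $a,b\in\Q$, and for $b\neq 0$ its minimal polynomial over $\Q$ is $x^2 - 2ax + (a^2 - b^2 d)$. Thus $\alpha$ is an algebraic integer if and only if both the trace $2a$ and the norm $a^2 - b^2 d$ lie in $\Z$ (when $b=0$ we recover $\alpha\in\Z$). First I would set $2a = m \in \Z$ and substitute to obtain $m^2/4 - b^2 d \in \Z$, whence $4b^2 d \in \Z$.

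The key step, and the place where the squarefree hypothesis does the real work, is bounding the denominator of $b$. Writing $b = p/q$ in lowest terms, the condition $4b^2 d \in \Z$ gives $q^2 \mid 4p^2 d$, and since $\gcd(p,q)=1$ we get $q^2 \mid 4d$. Because $d$ is squarefree, this forces $q \in \{1,2\}$: an odd prime factor $\ell$ of $q$ would give $\ell^2 \mid d$, while $4\mid q$ would give $4\mid d$, both impossible. Hence $2b\in\Z$ as well, so I may write $a = m/2$ and $b = n/2$ with $m,n\in\Z$, and the norm condition collapses to the single congruence $m^2 \equiv n^2 d \pmod 4$.

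The remainder is a short case analysis on $d \bmod 4$, using that squares are congruent to $0$ or $1$ modulo $4$. When $d\equiv 1$, the congruence reduces to $m\equiv n\pmod 2$, so $\mathcal{O}$ consists of the elements $(m+n\sqrt{d})/2$ with $m,n$ of equal parity, which is precisely $\Z[\frac{1+\sqrt{d}}{2}]$; this yields the basis in (i). When $d\equiv 2$ or $3\pmod 4$, the congruence $m^2\equiv n^2 d\pmod 4$ is satisfiable only when $m$ and $n$ are both even, forcing $a,b\in\Z$, so $\mathcal{O}=\Z[\sqrt{d}]$ and we obtain the basis in (ii).

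Finally I would compute the discriminant of each integral basis $\{1,\omega\}$ via $D = \bigl(\sigma_1(\omega)-\sigma_2(\omega)\bigr)^2$, where $\sigma_1,\sigma_2$ are the two embeddings of $K$ into $\C$ (equivalently, the square of the determinant of the matrix of conjugates). For $\omega=\sqrt{d}$ this gives $(-2\sqrt{d})^2 = 4d$, and for $\omega=\frac{1+\sqrt{d}}{2}$ it gives $(-\sqrt{d})^2 = d$, matching the stated values of $D$. No step presents a genuine obstacle; the only subtlety is the denominator bound in the second paragraph, where the squarefreeness of $d$ is essential.
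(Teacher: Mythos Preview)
Your argument is correct and is the standard textbook derivation of the ring of integers and discriminant of a quadratic field. Note, however, that the paper does not actually supply a proof of this proposition: it is stated as a background fact recalled from Chapter~5 of Cohen's \emph{A Course in Computational Algebraic Number Theory}, and is used without further justification. So there is no ``paper's own proof'' to compare against; you have simply filled in the well-known details that the author chose to cite rather than reproduce.
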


Thus, we may write $\mathcal{O}=\Z[\omega]$, where $\omega=\frac{D+\sqrt{D}}{2}$.

\begin{prop}\label{primedecomp} Let $p$ be a prime and $\left(\frac{a}{p}\right)$ be the Legendre symbol. Then the decomposition of prime ideals of $\Z$ in $\mathcal{O}$ is as follows:

\begin{itemize} \item[(i)] If $p\mid D$, i.e., if $\left(\frac{D}{p}\right)=0$, then $p$ is ramified, and we have $p\mathcal{O}=\mathfrak{p}^2$, where $\mathfrak{p}=p\mathcal{O}+\omega\mathcal{O}$, except when $p=2$ and $D\equiv12\pmod{16}$. In this case $\mathfrak{p}=p\mathcal{O}+(1+\omega)\mathcal{O}$.

\item[(ii)] If $\left(\frac{D}{p}\right)=-1$, then $p$ is inert, and hence $\mathfrak{p}=p\mathcal{O}$ is a prime ideal.

\item[(iii)] If $\left(\frac{D}{p}\right)=1$, then $p$ is split, and we have $p\mathcal{O}=\mathfrak{p}_1\mathfrak{p}_2$, where $\mathfrak{p}_1=p\mathcal{O}+\left(\omega-\frac{D+c}{2}\right)\mathcal{O}$ and $\mathfrak{p}_2=p\mathcal{O}+\left(\omega-\frac{D-c}{2}\right)$, and $c$ is any solution to the congruence $c^2\equiv D\pmod{4p}$.\end{itemize}\end{prop}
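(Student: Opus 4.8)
The plan is to invoke the Kummer--Dedekind factorization theorem (see, for example, \cite{Cohen}), which is available here precisely because Proposition \ref{intbases} guarantees that $\mathcal{O}=\Z[\omega]$ is monogenic: for \emph{every} rational prime $p$, the factorization of $p\mathcal{O}$ into prime ideals mirrors the factorization of the minimal polynomial $f$ of $\omega$ modulo $p$, with no index condition left to verify. Concretely, writing $f\equiv\prod_i g_i(x)^{e_i}\pmod p$ with the $g_i$ distinct and irreducible over $\F_p$, one has $p\mathcal{O}=\prod_i\mathfrak{p}_i^{e_i}$ where $\mathfrak{p}_i=p\mathcal{O}+g_i(\omega)\mathcal{O}$. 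The first step is therefore to record that $\omega=\frac{D+\sqrt{D}}{2}$ satisfies $f(x)=x^2-Dx+\frac{D^2-D}{4}$, and that a direct computation gives $\operatorname{disc}(f)=D$. Every case of the proposition then reduces to understanding the factorization of this single quadratic modulo $p$.

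For an odd prime $p$ the three cases separate cleanly according to the value of the Legendre symbol $\left(\frac{D}{p}\right)$, which by the discriminant computation is exactly the criterion deciding whether $f$ has two, one, or zero roots in $\F_p$. If $\left(\frac{D}{p}\right)=-1$, then $f$ is irreducible mod $p$, so $p\mathcal{O}$ is prime and $p$ is inert, giving (ii). If $\left(\frac{D}{p}\right)=1$ and $p\nmid D$, then $f$ has the two distinct roots $\frac{D\pm c}{2}$ in $\F_p$, where $c^2\equiv D\pmod{4p}$ (the congruence modulo $4p$ rather than $p$ is what guarantees $\frac{D\pm c}{2}\in\Z$); reading off the corresponding linear factors produces the two distinct primes $\mathfrak{p}_1,\mathfrak{p}_2$ of (iii). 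If $p\mid D$, then, since $p$ is odd, $\frac{D^2-D}{4}\equiv 0\pmod p$ and $f\equiv x^2\pmod p$ has a repeated linear factor, so $p\mathcal{O}=(p\mathcal{O}+\omega\mathcal{O})^2$, which is the generic form of (i).

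The delicate point, and the step I expect to be the main obstacle, is $p=2$, where the Legendre symbol must be replaced by the Kronecker symbol and the fine behavior is governed by $D$ modulo higher powers of $2$. Here the relevant arithmetic lives modulo $16$: reducing $f$ modulo $2$ and using that a discriminant with $2\mid D$ forces $D\equiv 0\pmod 4$, one writes $D=4m$ and finds $f\equiv x^2+m\pmod 2$. When $m$ is even, equivalently $D\equiv 8\pmod{16}$, the repeated root is at $0$ and $\mathfrak{p}=2\mathcal{O}+\omega\mathcal{O}$; when $m$ is odd, equivalently $D\equiv 12\pmod{16}$, the repeated root shifts to $1$, forcing the generator $1+\omega$ and thereby explaining the single exceptional case in (i). The remaining possibilities at $2$, namely the split case $D\equiv 1\pmod 8$ and the inert (non-residue) case, are handled by the same reduction of $f$ modulo $2$, after checking that the Kronecker-symbol conditions $\left(\frac{D}{2}\right)=\pm 1$ coincide with $f$ splitting into distinct factors or remaining irreducible over $\F_2$.

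Finally, I would close by confirming that the ideals named in the statement are exactly those produced by the theorem: the equality $p\mathcal{O}+g(\omega)\mathcal{O}=\mathfrak{p}_i$ for each listed generator $g(\omega)$ is immediate once the roots of $f$ modulo $p$ have been identified, and the ramification and splitting exponents match because $f$ reduces to a repeated, a distinct-factor, or an irreducible form in exactly the three cases above. This last step is essentially bookkeeping; the only genuine care required is the $p=2$ analysis and the consistent use of $c^2\equiv D\pmod{4p}$ to keep every generator inside $\mathcal{O}$.
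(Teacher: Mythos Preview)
The paper does not actually prove this proposition: it is quoted verbatim as a standard fact from Chapter~5 of Cohen~\cite{Cohen}, alongside Proposition~\ref{intbases}, and is used purely as input to Lemma~\ref{1.003} and Theorem~\ref{1.004}. So there is no ``paper's own proof'' to compare against.

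That said, your argument via the Kummer--Dedekind theorem is correct and is precisely the standard route (and indeed the one Cohen takes). The monogenicity $\mathcal{O}=\Z[\omega]$ from Proposition~\ref{intbases} removes any index obstruction, your computation of the minimal polynomial $f(x)=x^2-Dx+\tfrac{D^2-D}{4}$ and its discriminant $D$ is right, and the case split on $\left(\tfrac{D}{p}\right)$ follows immediately. Your handling of $p=2$ is also accurate: since $2\mid D$ forces $D=4d$ with $d\equiv 2$ or $3\pmod 4$, one gets $D\equiv 8$ or $12\pmod{16}$, and reducing $f$ modulo $2$ gives $x^2$ or $(x+1)^2$ respectively, explaining the exceptional generator $1+\omega$ exactly as stated. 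One small wrinkle you glossed over: when $2\nmid D$ (so $D\equiv 1\pmod 4$), the split/inert dichotomy at $p=2$ is governed by $D\bmod 8$, and you should check that $f\equiv x^2+x+\tfrac{D-1}{4}\pmod 2$ is reducible precisely when $D\equiv 1\pmod 8$; this is routine but worth stating explicitly rather than folding into ``the Kronecker-symbol conditions.''
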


\begin{lemma}\label{1.003} Let $p\in\Z$ be a prime that is either ramified or split in $\mathcal{O}$, i.e., is of type (i) or (iii) from Proposition \ref{primedecomp}. Let $s\in\N$. Then 
\[t(t+1)(t+2)(t+3)\ldots(t+sp-1)\in\left\{\begin{array}{ll}\mathfrak{p}^s&\textrm{ where $p$ is type (i) and $p\mathcal{O}=\mathfrak{p}^2$},\\
\mathfrak{p}_1^s&\textrm{ where $p$ is type (iii) and $p\mathcal{O}=\mathfrak{p}_1\mathfrak{p}_2$},\\
\mathfrak{p}_2^s&\textrm{ where $p$ is type (iii) and $p\mathcal{O}=\mathfrak{p}_1\mathfrak{p}_2$}.\end{array}\right.\] \end{lemma}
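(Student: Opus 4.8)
The plan is to show that the product $t(t+1)(t+2)\cdots(t+sp-1)$ of $sp$ consecutive elements of $\mathcal{O}$ lies in the appropriate prime power ideal by localizing the question at the prime $p$ and analyzing the residues modulo $\mathfrak{p}$. The key structural observation is that when $p$ is ramified or split, the residue field $\mathcal{O}/\mathfrak{p}$ (where $\mathfrak{p}$ is $\mathfrak{p}$, $\mathfrak{p}_1$, or $\mathfrak{p}_2$ respectively) is isomorphic to $\F_p$, the field with $p$ elements. This is precisely the difference from the inert case, where the residue field would be $\F_{p^2}$ and a run of $p$ consecutive integers need not hit every residue class. First I would fix which $\mathfrak{p}$ we are working with and establish that the natural map $\Z \to \mathcal{O}/\mathfrak{p}$ is surjective with kernel $p\Z$, so that $\mathcal{O}/\mathfrak{p} \cong \F_p$.

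Granting that $\mathcal{O}/\mathfrak{p} \cong \F_p$, the heart of the argument is a counting statement: among any $p$ consecutive elements $t, t+1, \ldots, t+p-1$ of $\mathcal{O}$, the images in $\mathcal{O}/\mathfrak{p} \cong \F_p$ run through all $p$ residue classes exactly once, since consecutive integers $\overline{t}, \overline{t}+1, \ldots, \overline{t}+(p-1)$ exhaust $\F_p$. In particular, exactly one of these $p$ consecutive elements is divisible by $\mathfrak{p}$, i.e.\ lies in $\mathfrak{p}$. I would partition the $sp$ consecutive factors into $s$ consecutive blocks of length $p$, namely $\{t,\ldots,t+p-1\}, \{t+p,\ldots,t+2p-1\}, \ldots, \{t+(s-1)p,\ldots,t+sp-1\}$. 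Each block contains exactly one factor lying in $\mathfrak{p}$, contributing one factor of $\mathfrak{p}$ to the product. Since $\mathfrak{p}$ is a prime ideal and these contributions come from distinct factors, multiplying across all $s$ blocks shows the product lies in $\mathfrak{p}^s$, as claimed.

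The main obstacle I anticipate is correctly justifying that $\mathcal{O}/\mathfrak{p} \cong \F_p$ in both the ramified and split cases, uniformly enough to handle the $p=2$ exceptional subcase from Proposition~\ref{primedecomp}(i). The cleanest route is to use that in a ramified or split prime the residue degree $f$ of $\mathfrak{p}$ over $p$ equals $1$ (in contrast to $f=2$ for inert primes), so by the definition of residue degree the residue field $\mathcal{O}/\mathfrak{p}$ has exactly $p^f = p$ elements; since it is a field containing the image of $\Z$, it must be $\F_p$. I would verify that the explicit generators for $\mathfrak{p}$ given in Proposition~\ref{primedecomp}, including the exceptional generator $p\mathcal{O} + (1+\omega)\mathcal{O}$ when $p=2$ and $D \equiv 12 \pmod{16}$, indeed yield a quotient of size $p$, so that the counting argument applies identically in every case. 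Once this residue-field identification is in hand, the block-counting step is routine and the factor of $\mathfrak{p}^s$ follows immediately.
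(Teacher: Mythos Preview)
Your argument is correct, and it reaches the same destination as the paper---namely, that each block of $p$ consecutive elements $t+jp,\ldots,t+jp+(p-1)$ contains an element of $\mathfrak{p}$---but by a cleaner route. The paper works explicitly with the integral basis $\{1,\omega\}$: it defines $\phi(a+b\omega)=(a,b)$, writes out an arbitrary element of $\mathfrak{p}$ (separately in the ramified, exceptional ramified, and split cases) in these coordinates, and then argues that for given $t=u+v\omega$ one can solve for parameters $b,e,f$ so that some $t+j$ with $0\le j<p$ lands in $\mathfrak{p}$. Your approach bypasses all of this by invoking the residue degree: since $f(\mathfrak{p}\mid p)=1$ in the ramified and split cases, $\mathcal{O}/\mathfrak{p}\cong\F_p$, and the images of $t,t+1,\ldots,t+p-1$ are $\overline{t},\overline{t}+1,\ldots,\overline{t}+(p-1)$, which exhaust $\F_p$. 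This handles the exceptional case $p=2$, $D\equiv12\pmod{16}$ uniformly and would generalize without change to primes of residue degree one in any number field, whereas the paper's explicit coordinate computation is tied to the quadratic setting and requires case analysis. The trade-off is that the paper's argument is entirely elementary and self-contained, while yours imports the standard fact $|\mathcal{O}/\mathfrak{p}|=p^{f}$; but that fact is certainly available at this level, and your version is both shorter and more transparent.
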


\begin{proof} 
Define a map $\phi:\mathcal{O}\rightarrow\R^2$ by $\phi(a+b\omega)=(a,b)$, where $a,b\in\Z$. Because $\{1,\omega\}$ is an integral basis for $\mathcal{O}$, it is clear that $\phi$ is well defined. We now examine the image of ramified and split ideals $p\mathcal{O}$ under $\phi$.

First note that $\omega$ satisfies the equation $\omega^2=\frac{D-D^2}{4}+D\omega$.

Suppose $p$ is ramified. Then from Proposition \ref{primedecomp}, we have $p\mathcal{O}=\mathfrak{p}^2,$ where $\mathfrak{p}=p\mathcal{O}+\omega\mathcal{O}$, excluding the case that $p=2$ and $D\equiv 12\pmod{16}$. Thus, an arbitrary element $q\in\mathfrak{p}$ looks like 
\begin{align*}q&=p(a+b\omega)+\omega(e+f\omega)\\
&=ap+(bp+e)\omega+f\omega^2\\
&=ap+(bp+e)\omega+f\left(\frac{D-D^2}{4}+D\omega\right)\\
&=ap+f\left(\frac{D-D^2}{4}\right)+(bp+e+Df)\omega,\end{align*} 
where $a,b,e,f\in\Z$. Thus, we have \[\phi(q)=\left(ap+f\left(\frac{D-D^2}{4}\right),bp+e+Df\right).\]

In the case that $p=2$ and $D\equiv 12\pmod{16}$, we have $\mathfrak{p}=p\mathcal{O}+(1+\omega)\mathcal{O}$. Thus, an arbitrary element $q\in\mathfrak{p}$ looks like 
\begin{align*}q&=p(a+b\omega)+(1+\omega)(e+f\omega)\\
&=ap+e+(bp+e+f)\omega+f\omega^2\\
&=ap+e+(bp+e+f)\omega+f\left(\frac{D-D^2}{4}+D\omega\right)\\
&=ap+e+f\left(\frac{D-D^2}{4}\right)+(bp+e+(D+1)f)\omega,\end{align*} 
where $a,b,e,f\in\Z$. Thus, we have \[\phi(q)=\left(ap+e+f\left(\frac{D-D^2}{4}\right),bp+e+(D+1)f\right).\]

Alternatively, suppose $p$ is split. Then from Proposition \ref{primedecomp}, we have $p\mathcal{O}=\mathfrak{p}_1\mathfrak{p}_2$, where  $\mathfrak{p}_1=p\mathcal{O}+\left(\omega-\frac{d+c}{2}\right)\mathcal{O}$ and $\mathfrak{p}_2=p\mathcal{O}+\left(\omega-\frac{D-c}{2}\right)$ and $c$ is any solution to the congruence $c^2\equiv D\pmod{4p}$.

Thus, an arbitrary element of $q\in\mathfrak{p}_1$ (resp.~$\mathfrak{p}_2$) looks like
\begin{align*}q&=p(a+b\omega)+\left(\omega-\frac{D\pm c}{2}\right)(e+f\omega)\\
&=ap+bp\omega+e\omega +f\omega^2+f\left(\frac{D-D^2}{4}+D\omega\right)-\left(\frac{D\pm c}{2}\right)e+\left(\frac{D\pm c}{2}\right)f\omega\\
&=ap+f\left(\frac{D-D^2}{4}\right)-\left(\frac{D\pm c}{2}\right)e+\left(bp+e+fD+\left(\frac{D\pm c}{2}\right)f\right)\omega,\end{align*} 
where $a,b,e,f\in\Z$. Thus, we have \[\phi(q)=\left(ap+f\left(\frac{D-D^2}{4}\right)-\left(\frac{D\pm c}{2}\right)e,\left(bp+e+fD+\left(\frac{D\pm c}{2}\right)f\right)\right).\]

Now let $t\in\mathcal{O}$ and suppose $t=u+v\omega$ so that $\phi(t)=(u,v)$. Note that $D\pm c$ is always even, and if we pick $f$ so that $f\left(\frac{D-D^2}{4}\right)$ is an integer, then $\phi(q)\in\Z^2$ in all three of the above cases. Further, in each case, we may solve the equations $bp+e+Df=v$, $bp+e+(D+1)f=v$ and $bp+e+fD+\left(\frac{D\pm c}{2}\right)f=v$ for $b,e,f$.
Thus, in each case, it follows that the set \[\{t+jp,t+jp+1,t+jp+2,t+jp+3,\ldots,t+jp+(p-1)\}\]
contains an element that belongs to $\mathfrak{p}$ or $\mathfrak{p_1}$ or $\mathfrak{p_2}$ respectively, for
$j=0,\ldots,s-1$.
Thus, the set $\{t,t+1,t+2,\ldots,t+sp-1\}$
contains $s$ elements that belong to $\mathfrak{p}$ or $\mathfrak{p_1}$ or $\mathfrak{p_2}$ respectively, proving the lemma.\end{proof}

\begin{remark}Note that if $p$ is inert, i.e.~of type (ii), the expression $t(t+1)(t+2)(t+3)\ldots(t+n)$ need not belong to $p\mathcal{O}$. For example, when $K=\Q(i)$ and $\mathcal{O}=\Z[i]$, $p=3$ and $t=i$, note that none of
$i,1+i,2+i,\ldots,n+i$ contain a factor of $3$.\end{remark}

Using the above characterization of primes in a quadratic number field, we have the
following result for the $\mathcal{O}$-\textsc{pte} problem analogous to $n!\mid
C_{n+1}$:

\begin{theorem}\label{1.004} Let $C_{n+1}$ be the constant associated
with the $\mathcal{O}$-\textsc{pte} problem. Suppose $p$ is either (i) ramified or (iii) split and let \[\mathfrak{p}=\left\{\begin{array}{ll}
\mathfrak{p}&\textrm{where $p$ is type (i) and $p\mathcal{O}=\mathfrak{p}^2$},\\
\mathfrak{p}_1&\textrm{where $p$ is type (iii) and $p\mathcal{O}=\mathfrak{p}_1\mathfrak{p}_2$},\\
\mathfrak{p}_2&\textrm{where $p$ is type (iii) and $p\mathcal{O}=\mathfrak{p}_1\mathfrak{p}_2$}\end{array}\right.\]
Let $s=\lfloor (n+1)/p\rfloor$ and let $\ell$ be the highest power such that $n+1\in\mathfrak{p}^\ell$. Then $C_{n+1}\in \mathfrak{p}^{\max(s-\ell,0)}$.\end{theorem}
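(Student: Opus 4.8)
The plan is to prove the stated membership for each individual solution constant and then pass to the greatest common divisor. Because $\mathcal{O}$ is a \textsc{ufd}, writing $v_{\mathfrak{p}}$ for the exponential valuation attached to the prime ideal $\mathfrak{p}$, one has $v_{\mathfrak{p}}(C_{n+1}) = \min v_{\mathfrak{p}}(C_{n+1,X,Y})$ over all ideal solutions, so it suffices to fix an arbitrary ideal solution $\{x_1,\dots,x_{n+1}\}=_n\{y_1,\dots,y_{n+1}\}$ and show $v_{\mathfrak{p}}(C_{n+1,X,Y}) \ge s-\ell$; the bound $\ge 0$ is automatic, and together these give $C_{n+1}\in\mathfrak{p}^{\max(s-\ell,0)}$. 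Here $\ell = v_{\mathfrak{p}}(n+1)$ by the definition of $\ell$ as the largest power with $n+1\in\mathfrak{p}^{\ell}$.

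The engine of the proof is the identity underlying the classical fact $n!\mid C_{n+1}$ over $\Z$ (due to Kleiman), reorganised so that Lemma \ref{1.003} can be inserted in place of the integer statement $(n+1)!\mid t(t-1)\cdots(t-n)$. First I would record two consequences of the defining relation \eqref{1.00a2}. Since the solution is ideal, the two monic degree-$(n+1)$ polynomials $\prod_{i}(z-x_i)$ and $\prod_{i}(z-y_i)$ share every coefficient except the constant term, so their elementary symmetric functions agree up through $e_n$; a single application of Newton's identity in degree $n+1$ then yields
\[\sum_{i=1}^{n+1}x_i^{n+1}-\sum_{i=1}^{n+1}y_i^{n+1} = -(n+1)\,C_{n+1,X,Y},\]
the sign being immaterial for what follows.

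Next I would convert powers into falling factorials. Because $t^{n+1}-t(t-1)\cdots(t-n)$ has degree at most $n$ and the power sums of the two sets agree up to degree $n$, the lower-order contributions cancel in the difference, so
\[-(n+1)\,C_{n+1,X,Y} = \sum_{i=1}^{n+1}x_i(x_i-1)\cdots(x_i-n) - \sum_{i=1}^{n+1}y_i(y_i-1)\cdots(y_i-n).\]
Each summand is a product of $n+1$ consecutive elements of $\mathcal{O}$. Since $p$ is ramified or split, Lemma \ref{1.003} applies: as $sp\le n+1$ with $s=\lfloor(n+1)/p\rfloor$, the $n+1$ consecutive factors contain a block of $sp$ consecutive elements whose product lies in $\mathfrak{p}^{s}$, and the remaining factors keep the whole product inside the ideal $\mathfrak{p}^{s}$; the same holds for the $y_i$. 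Hence the right-hand side, a sum of elements of $\mathfrak{p}^{s}$, lies in $\mathfrak{p}^{s}$, whence $v_{\mathfrak{p}}\big((n+1)\,C_{n+1,X,Y}\big)\ge s$. Written additively this is $\ell+v_{\mathfrak{p}}(C_{n+1,X,Y})\ge s$, i.e.\ $v_{\mathfrak{p}}(C_{n+1,X,Y})\ge s-\ell$, and minimising over solutions completes the proof.

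The crux — and the precise point at which the inert case must be excluded — is the divisibility of a product of $n+1$ consecutive elements by $\mathfrak{p}^{s}$. Over $\Z$ this is immediate from $(n+1)!\mid t(t-1)\cdots(t-n)$, but in $\mathcal{O}$ it can fail (see the Remark following Lemma \ref{1.003}), so everything is routed through Lemma \ref{1.003}, whose hypothesis $N(\mathfrak{p})=p$ is what guarantees that $p$ consecutive integers realise a full residue system modulo $\mathfrak{p}$. The only other point needing care is the factor $n+1$, which is exactly what produces the correction $-\ell$: one reads off $v_{\mathfrak{p}}(n+1)$ according to Proposition \ref{primedecomp} (it equals $2\,v_p(n+1)$ when $p$ is ramified and $v_p(n+1)$ when $p$ is split), matching the definition of $\ell$.
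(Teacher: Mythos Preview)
Your proof is correct and follows essentially the same route as the paper's: both derive $(n+1)C_{n+1,X,Y}\in\mathfrak{p}^{s}$ by combining the Newton-identity relation $p_{n+1}(X)-p_{n+1}(Y)=\pm(n+1)C_{n+1,X,Y}$ with the fact that a product of $n+1$ consecutive elements of $\mathcal{O}$ lies in $\mathfrak{p}^{s}$ via Lemma~\ref{1.003}, then peel off the factor $n+1$. The only cosmetic difference is that you package the cancellation of lower-order terms through the observation that $t^{n+1}-t(t-1)\cdots(t-n)$ has degree $\le n$, whereas the paper writes out the expansion of $t(t+1)\cdots(t+n)$ in elementary symmetric polynomials explicitly before subtracting.
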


We digress before proving Theorem \ref{1.004}. As stated earlier, many
results on the $\mathcal{O}$-\textsc{pte} problem involve Newton's
identities and symmetric polynomials, including the proof of
\eqref{1.00a1} $\Longleftrightarrow$ \eqref{1.00a2}. We need them for the
proof of some results below, so although they are well known and easily
found in the literature (for example see \cite{Stewart}), we repeat them
here.

Let $n\in\N$. Let $s_1,\ldots,s_n$ be variables. Then for all
integers $k\geq 1$, we define
\[p_k(s_1,\ldots,s_n):=s_1^k+s_2^k+\ldots+s_n^k,\] the $k$th power
sum in $n$ variables. Similarly, for $k\geq 0$, we define
\begin{align*}e_0(s_1,\ldots,s_n)&=1\\
e_1(s_1,\ldots,s_n)&=s_1+s_2+\ldots+s_n\\
e_2(s_1,\ldots,s_n)&=\sum_{i<j}s_is_j\\
&\vdots\\
e_n(s_1,\ldots,s_n)&=s_1s_2\cdots s_n\\
e_k(s_1,\ldots,s_n)&=0,\forall k>n,\end{align*} to be the
elementary symmetric polynomials in $n$ variables. Then we have
the result known as Newton's identities:
\begin{equation}\label{-2.00}ke_k(s_1,\ldots,s_n)=
\sum_{i=1}^k(-1)^{i-1}e_{k-i}(s_1,\ldots,s_n)p_i(s_1,\ldots,s_n),\end{equation}
for all $k\geq 1$. Note that this can be rearranged to
\begin{equation}\label{-2.01}p_k(s_1,\ldots,s_n)=
(-1)^{k-1}ke_k(s_1,\ldots,s_n)+\sum_{i=1}^{k-1}(-1)^{k-i}e_{k-i}(s_1,\ldots,s_n)p_i(s_1,\ldots,s_n),\end{equation}
for $k\geq 2$. Another fact is the identity
\begin{equation}\label{-2.02}\prod_{i=1}^n(t-s_i)=
\sum_{k=0}^n(-1)^ke_k(s_1,\ldots,s_n)t^{n-k}.\end{equation} Thus,
the coefficients of a polynomial are elementary symmetric
polynomials of its roots, and because of \eqref{-2.00}, they
depend on the power sums $p_i(s_1,\ldots,s_n)$.

We are now able to proceed with the proof of the Theorem.

\begin{proof}[Proof of Theorem \ref{1.004}] We closely emulate the proof
of Proposition 2.1 in \cite{Rees}.
Suppose $X=\{x_1,\ldots,x_{n+1}\}$ and $Y=\{y_1,\ldots,y_{n+1}\}$
are subsets of $\mathcal{O}$, and $X=_nY$. Then we have
\[(z-x_1)\cdots(z-x_{n+1})-(z-y_1)\cdots(z-y_{n+1})=C_{n+1,X,Y}\] where
$C_{n+1,X,Y}=(-1)^{n+1}(x_1x_2\cdots x_{n+1}-y_1y_2\cdots
y_{n+1})$. Then from the identity \eqref{-2.02}, we have
\begin{align*}(z-x_1)\cdots(z-x_{n+1})&=\sum_{k=0}^{n+1}(-1)^ke_k(x_1,\ldots,x_{n+1})z^{n+1-k}\\
(z-y_1)\cdots(z-y_{n+1})&=\sum_{k=0}^{n+1}(-1)^ke_k(y_1,\ldots,y_{n+1})z^{n+1-k}.\end{align*}

From the identity \eqref{-2.01}, it follows that

\begin{multline}\label{1.004a}
p_{k+1}(x_1,\ldots,x_{n+1})=(-1)^{k}(k+1)e_{k+1}(x_1,\ldots,x_{n+1})\\
\quad+\sum_{i=1}^{k}(-1)^{k+1-i}e_{k+1-i}(x_1,\ldots,x_{n+1})p_i(x_1,\ldots,x_{n+1}),\end{multline}
and
\begin{multline}\label{1.004b}p_{k+1}(y_1,\ldots,y_{n+1})=(-1)^{k}(k+1)e_{k+1}(y_1,\ldots,y_{n+1})\\
\quad+\sum_{i=1}^{k}(-1)^{k+1-i}e_{k+1-i}(y_1,\ldots,y_{n+1})p_i(y_1,\ldots,y_{n+1}).\end{multline}

By hypothesis, we have
$p_{k}(x_1,\ldots,x_{n+1})=p_{k}(y_1,\ldots,y_{n+1})$ and
$e_{k}(x_1,\ldots,x_{n+1})=e_{k}(y_1,\ldots,y_{n+1})$ for $1\leq
k\leq n$, and so subtracting \eqref{1.004b} from \eqref{1.004a} it
follows that
\begin{align}\label{1.004c}p_{n+1}(x_1,\ldots,x_{n+1})-p_{n+1}(y_1,\ldots,y_{n+1})&=
(-1)^{n}(n+1)e_{n+1}(x_1,\ldots,x_{n+1})\nonumber\\
&\quad-(-1)^{n}(n+1)e_{n+1}(y_1,\ldots,x_{y+1}).\end{align} Now
noting that
$C_{n+1,X,Y}=e_{n+1}(x_1,\ldots,x_{n+1}-e_{n+1})(y_1,\ldots,y_{n+1})$,
rearranging \eqref{1.004c} we get
\begin{equation}\label{1.004c1}p_{n+1}(x_1,\ldots,x_{n+1})+(-1)^{n}
(n+1)C_{n+1,X,Y}=p_{n+1}(y_1,\ldots,y_{n+1}).\end{equation}

Since $s=\lfloor (n+1)/p\rfloor$, it follows that $sp<n+2$, and so from the above lemma we have

\begin{equation}\label{1.03}
\sum_{k=0}^{n+1}(-1)^ke_k(0,-1,\ldots,-n)t^{n+1-k}
=t(t+1)(t+2)(t+3)\ldots(t+n)\in \mathfrak{p}^s.\end{equation}

Substituting $t=x_1,x_2,\ldots,x_{n+1}$ into \eqref{1.03} and
summing, and doing the same for $t=y_1,y_2,\ldots,y_{n+1}$, we get

\begin{align}\label{1.004d}
\sum_{i=1}^{n+1}\sum_{k=0}^{n+1}(-1)^ke_k(0,-1,\ldots,-n)x_i^{n+1-k}&\in\mathfrak{p}^s\end{align}
and
\begin{align}\label{1.004e}\sum_{i=1}^{n+1}\sum_{k=0}^{n+1}(-1)^ke_k(0,-1,\ldots,-n)y_i^{n+1-k}&\in\mathfrak{p}^s.\end{align}

Subtracting \eqref{1.004e} from \eqref{1.004d} and applying
\eqref{1.004c1}, we get
\[(n+1)C_{n+1,X,Y}\in\mathfrak{p}^s.\] Since $n+1\in \mathfrak{p}^\ell$ and
$n+1\notin \mathfrak{p}^{\ell+1}$, we have
$C_{n+1,X,Y}\in\mathfrak{p}^{\max(s-\ell,0)},$ and because $X$ and $Y$
were arbitrary solutions to the $\mathcal{O}$-\textsc{pte} problem, we
have $C_{n+1} \in\mathfrak{p}^{\max(s-\ell,0)}$, proving the theorem.\end{proof}

The above divisibility results give lower bounds for $C_n$ for the \textsc{pte}
problem over $\Z[i]$; these are stated in Table \ref{table1}.
{\small \begin{table}\caption{Divisibility Results for the
$\Z[i]$-\textsc{pte} Problem \label{table1}}
\begin{tabular}[c]{|l|l|l|} \hline
$n$&lower bound for $C_n$ & upper bound for $C_n$\\
\hline $2$ &$1$& $1$\\
\hline $3$ &$(1+i)^2$ &$(1+i)^2$\\
\hline $4$ &$1$ &$1$\\
\hline $5$ &$(1+i)^4(2+i)(2-i)$ &$(1+i)^5(2+i)(2-i)$\\
\hline $6$ &$(1+i)^3(2+i)(2-i)$ &$(1+i)^4(2-i)^2(2+i)^2$\\
\hline $7$ &$(1+i)^4(2+i)(2-i)\cdot3$ & $(1+i)^6(2-i)^2(2+i)^2\cdot3$\\
\hline $8$ &$(1+i)^4(2+i)(2-i)$ &$(1+i)^8(2-i)^2(2+i)^2(3+2i)(3-2i)$\\
\hline $9$ &$(1+i)^5(2+i)(2-i)$ &
$(1+i)^{18}(2-i)^2(2+i)^2\cdot3^4\cdot7^2\cdot11\cdot(3+2i)$\\
& $\cdot3^2\cdot(3+2i)(3-2i)$ &
$(-3+2i)(4+i)(4-i)\cdot23\cdot(5+2i)(5-2i)\,(\ast)$\\
\hline $10$ &$(1+i)^5(2+i)(2-i)$ &
$(1+i)^{13}(2-i)^2(2+i)^2\cdot3^2\cdot(3+2i)(-3+2i)$\\
&$(3+2i)(3-2i)$&$(4+i)(4-i)$\\
\hline $11$ &$(1+i)^6(2+i)^2(2-i)^2$ &none known\\
\hline $12$ &$(1+i)^6(2+i)^2(2-i)^2$ &
$(1+i)^{24}(2-i)^3(2+i)^3\cdot3^9\cdot7^2\cdot11^2\cdot(3+2i)^2$\\
&&$(-3+2i)^2(4+i)(4-i)\cdot19\cdot23\cdot(5+2i)$\\
&&$(5-2i)\cdot31\,(\ast)$\\
\hline $13$ &$(1+i)^7(2+i)^2(2-i)^2$ &none known\\
&$(3+2i)(3-2i)(4+i)(4-i)$&\\
\hline $14$ &$(1+i)^7(2+i)^2(2-i)^2$ &none known\\
&$(3+2i)(3-2i)(4+i)(4-i)$&\\
\hline $15$ &$(1+i)^8(2+i)(2-i)$ &none known\\
&$(3+2i)(3-2i)$&\\
\hline\end{tabular}\end{table}}

When $(\ast)$ appears in Table \ref{table1},
this means the upper bounds for $C_n$ come from the upper bounds
for $C_n$ for the \textsc{pte} problem over $\Z$ (which we have included
for comparison in Table \ref{table2}). In the next section we explain the upper bounds new to the $\Z[i]$-\textsc{pte} problem. These have been determined by
searching for solutions computationally.

\begin{table}\caption{Divisibility Results for the
$\Z$-\textsc{pte} Problem\label{table2}} \begin{tabular}{|l|l|l|}

\hline $n$ & Lower bound for $C_n/(n-1)!$& Upper bound for $C_n/(n-1)!$\\

\hline $2$ & $1$ & $1$\\

\hline $3$ & $2$ & $2$\\

\hline $4$ & $2\cdot3$ & $2\cdot3$\\

\hline $5$ & $2\cdot3\cdot5$ & $2\cdot3\cdot5$\\

\hline $6$ & $2^2\cdot3\cdot5$ & $2^3\cdot3\cdot5$\\

\hline $7$ & $3\cdot5\cdot7\cdot11$ & $2^2\cdot3\cdot5\cdot7\cdot11\cdot19$\\

\hline $8$ & $3\cdot5\cdot7\cdot11$ & $2^4\cdot3\cdot5\cdot7\cdot11\cdot13$\\

\hline $9$ & $3\cdot5\cdot7\cdot11$ &
$2^2\cdot3^2\cdot5\cdot7\cdot11\cdot13\cdot17\cdot23\cdot29$\\

\hline $10$ & $5\cdot7\cdot13$ &

$2^4\cdot3^2\cdot5\cdot7\cdot11\cdot13\cdot17\cdot23\cdot37\cdot53\cdot61$\\
&& $\cdot79\cdot83\cdot103\cdot107\cdot 109\cdot 113\cdot191$\\

\hline $11$ & $5\cdot7\cdot11\cdot13\cdot17$ & none known\\

\hline $12$ & $5\cdot7\cdot11$&

$2^4\cdot3^5\cdot5\cdot7\cdot11\cdot13^2\cdot17\cdot19\cdot23\cdot29\cdot31$\\

\hline \end{tabular}\end{table} Table \ref{table2}, which largely comes
from \cite{Borwein03,Borwein94} but is updated with the new solutions from
\cite{Broadhurst} and \cite{Choudhry}, shows that the constant for the
$\Z$-{\sc pte} problem has many more factors than that for the $\Z[i]$-
{\sc pte} problem. This demonstrates that the Gaussian integers are a much
less restrictive setting for the {\sc pte}-problem than the ordinary
integers.

\section{Computer Search for Solutions}\label{Resultsection}
We may restrict the $\mathcal{O}$-\textsc{pte} problem to a symmetric
version. This is helpful because there are fewer variables, but
at the same time, some ideal solutions may be missed. For odd $n$, this means finding solutions
$x_1,\ldots,x_n\in\mathcal{O}$ with
$\{x_1,\ldots,x_n\}=_{n-1}\{-x_1,\ldots,-x_n\}$. Since
$x_i^{2k}=(-x_i)^{2k}$ for all $k\in\N$, this means we only need to
consider solutions to $\sum_{i=1}^n x_i^e=0$ for
$e=1,3,\ldots,n-2.$ For example, $\{3+3i, 3+4i, 3+5i, -2-8i,
-7-4i\}=_4\{-3-3i,-3-4i,-3-5i,2+8i,7+4i\}$ is an ideal symmetric solution
of size $5$.

Similarly, for even $n$, this means finding solutions
$x_1,\ldots,x_{n/2},y_1,\ldots,y_{n/2}\in\mathcal{O}$ such that
$\{x_1,\ldots,x_{n/2},-x_1,\ldots,-x_{n/2}\}=_{n-1}
\{y_1,\ldots,y_{n/2},-y_1,\ldots,-y_{n/2}\}$. As above, since
$(-x_i)^{2e+1}=-x_i^{2e+1}$ for all $k\in\N$, we only need to
consider solutions to $\sum_{i=1}^{n/2}x_i^e=\sum_{i=1}^{n/2}y_i^e$ for
$e=2,4,\ldots,n-2$. For example,
$\{\pm1,\pm(4+i),\pm(3+i)\}=_{5}\{\pm(7i),\pm(7+4i), \pm(7-3i)\}$ is an
ideal symmetric solution of size $6$.

Thus, the symmetric case of \textsc{pte} problem involves half as many
variables as the
usual case. Some results concerning this case are discussed in
\cite{Borwein03,Choudhry2000,Choudhry2003}.

In \cite{Borwein03}, Borwein et al.~describe an algorithm for
finding odd and even symmetric solutions to the $\Z$-\textsc{pte}
problem. We have adapted this algorithm for finding
ordinary solutions as well as odd and even symmetric solutions to
the $\Z[i]$-\textsc{pte} problem. As the ideas behind the algorithms are not
any different from the original, one may see \cite{Borwein03} for an
explanation. This was
implemented first in {\it Maple} and then in {\tt C++}, using the Class
Library for Numbers.

The computer search was implemented to try to find solutions with real
and imaginary parts between $0$ and $30$ for sizes $10$ and $12$. The above
method is trivially parallelizable, so each search range was divided up
into intervals, which were then submitted to a cluster of machines.

The following symmetric solutions of size $10$ were found:
\begin{align*}\{\pm (9+i), &\pm (4+8i),\pm (8+4i), \pm(3-3i), \pm
(1-9i)\}=_{9}\\
\{&\pm (5+7i), \pm8, \pm(9+3i),\pm 8i, \pm (1-7i)\}\end{align*} which has
constant \[-(1+i)^{22}(2+i)^2(2-i)^23^2(3+2i)^2(3-2i)(4+i)(4-i)(5+2i),\]
and also \begin{align*}\{\pm (8+3i), &\pm (9+4i), \pm (11+2i), \pm (1-7i),
\pm (5+7i)\}=_{9}\\ \{&\pm (7+7i), \pm (11+1i), \pm (11+4i), \pm (1+6i),
\pm 5i\}\end{align*} which has constant
\[i(1+i)^{13}(2+i)^2(2-i)^23^2(3+2i)^2(3-2i)(4+i)(4-i)(5+2i)(5-2i)(5+4i).\]

Additionally, by the remark at the end of Section 2, the complex conjugates of these solutions are also ideal {\sc pte} solutions of size $10$. 
First note that none of these solutions lies on a line in the complex
plane. Thus they cannot be equivalent to a $\Z$-\textsc{pte} solution. 
By examining their constants and applying Proposition \ref{equivprop}, they cannot be equivalent to each other either. 

Further note that all the Gaussian integers in the first solution have
norm $\leq 90$, while in the second they all have norm $\leq 147$. This
contrasts with the ordinary integer case where from \cite{Borwein03} there
is no size $10$ solution with height less than $313$, and in fact, there
are only two inequivalent solutions with height less than $1500$. This
results corresponds to the intuition that Gaussian integer solutions
should be ``easier'' to find.

We now explain the second column of Table 1 above, which lists the upper bounds for the divisibility of $C_n$.

For $n=2$ and $n=3$, the upper bound comes from Table 2.

For $n=4$, the upper bound comes from the solution $\{0,0,0,0\}=_3\{1,-1,i,-i\}$.

For $n=5$, the solutions \[\{0,-5i,-3-4i,1+3i,1+3i\}=_4\{-5-5i,5,-4+3i,1-7i,2+6i\}\] 
and \[\{0,2-4i,3-i,-6-3i,-4-7i\}=_4\{-5-5i,-4-2i,4-3i,-2-6i,2+i\}\] have constants $-(1+i)^5(2-i)^2(2+i)^7$ and $i(1+i)^6(2-i)(2+i)^6$ respectively. The upper bound comes from taking the $\gcd$ of these constants, along with the constant associated to the complex conjugate of the second solution.

For $n=6$, the solution \[\{0,-5i,2-4i,-4-2i,-6+2i,-4+3i,-5-5i\}=_5\{-5+5i,1+3i,
-8+i,1-7i,4-3i\}\] has constant $-(1+i)^4(2-i)^2(2+i)^8(-3+2i)$. The upper bound comes from taking the $\gcd$ of this constant and the constant associated to the complex conjugate of this solution. 

For $n=7$, the solution \begin{align*}\{3+i,2+4i,&-3-4i,2-3i,-5+2i,-5+3i,6-3i\}=_6\\
\{&-3-i,-2-4i,3+4i,-2+3i,5-2i,5-3i,-6+3i\},\end{align*} has constant $(-i)(1+i)^6(2-i)^3(2+i)^23(3+2i)(4+i)(5-2i)$. The upper bound comes from taking the $\gcd$ of this constant and the constant associated to the complex conjugate of this solution.

For $n=8$, the symmetric solution \[\{\pm(2+2i),\pm3,\pm3i,\pm(2-2i)\}=_7\{\pm2,\pm2i,\pm i,\pm1\}\] has constant $(1+i)^8(2-i)^2(2+i)^2(3+2i)(-3+2i)$.

For $n=10$, the upper bound is obtained taking the $\gcd$ of the constants from the solutions listed above, as well as their complex conjugates.

For $n=9$ and $n=12$, the upper bound is obtained from factoring the bounds listed in Table 2.

Note that for $n$ in the range $4\leq n\leq 8$, many other $\Z[i]$-{\sc pte} solutions are known, but they give no further information about the divisibility of $C_n$. In these cases, we have not been able to prove if these upperbounds are true in general.

Unfortunately, no symmetric solutions of size $12$ have been found.
Considering that there is a symmetric solution of size $12$ of height
$151$ in the integer case, the usual intuition implies that a Gaussian
integer solution would not be much larger than the search range. However,
the search for size $12$ ideal solutions took approximately $2$ weeks on a
cluster of $16$ machines each with four $1$Ghz.~processors. Considering
the magnitude of the solutions found in the integer case, this method does
not seem likely to produce them in the Gaussian integer case.

\section{Further Work} There are some natural directions for further work
in this area. Clearly, the search range could be extended with the aim of
finding more ideal solutions over the Gaussian integers. 
Additionally, since Theorem \ref{1.004} (as well as the results in Section \ref{Divisibilitysection}) holds for any ring of integers of a quadratic number field that is a unique factorization domain, a computer search as described in the previous section is certainly possible in any such ring.

However, our current implementation does not readily generalize to any ring of integers.
Further, we believe that any implementation of a computer search in a different ring of integers would be significantly computationally slower than that for $\Z[i]$. 
This is because for most mathematical software, determining whether or not a complex number is a Gaussian integer is naturally much easier than determining whether or not it is, say, an element of $\Z[e^{2\pi i}]$.

We are currently working on another computational method which would generalize the work of Smyth in \cite{Smyth} and Choudhry and Wr\'oblewski in \cite{Choudhry}. Both of these
articles relate the $\Z$-\textsc{pte} problem to elliptic curves. 

\section{Acknowledgements} I would like to thank my supervisor Kevin
G.~Hare for his support and advice. I would also like to thank the referee for his careful reading of the paper and many helpful comments.

\bibliographystyle{amsplain}

\begin{thebibliography}{99}\bibitem{Alpers}A.~Alpers and R.~Tijdeman, {\it
The
two-dimensional Prouhet-Tarry-Escott problem\/}, J. Number Theory
{\bf 123} (2007), 402--412.

\bibitem{Borchert2009}B.~Borchert, P.~McKenzie, and K.~Reinhardt, {\it Few
Product Gates But Many Zeros\/}, Lecture Notes in Computer Science
No.~5734, 162--174.

\bibitem{Borweintext}P.~Borwein, {\it Computational Excursions in Analysis
and Number Theory\/}, CMS books in mathematics 10, Springer-Verlag, New
York, 2002.

\bibitem{Borwein03}P.~Borwein, P.~Lison\v{e}k and C.~Percival, {\it
Computational investigations of the Prouhet-Tarry-Escott
problem,\/} Math. Comp. {\bf 72} (2003), 2063--2070.

\bibitem{Borwein94}P.~Borwein, C.~Ingalls, {\it The
Prouhey-Tarry-Escott Problem revisited,\/} Enseign. Math. {\bf 40}
(1994), 3--27.

\bibitem{Broadhurst}D.~Broadhurst, A Chinese Prouhet-Tarry-Escott
solution {\tt http://physics.open.ac.uk/$\sim$dbroadhu/cpte.pdf}, 2007.

\bibitem{Cohen}H.~Cohen, {\it A Course in Computational Algebraic Number Theory.\/} Springer-Verlag, Berlin, 1993.

\bibitem{Choudhry2000}A.~Choudhry, {\it Ideal solutions of the
Tarry-Escott problem of degree four a related Diophantine system\/},
Enseign.~Math.~(2) {\bf 46} (2000), no.~3-4, 313--323.

\bibitem{Choudhry2003}A.~Choudhry, {\it Ideal solutions of the
Tarry-Escott problem of degrees four and five and related Diophantine
systems\/}, Enseign.~Math.~(2) {\bf 49} (2003), no.~1-2, 101--108.

\bibitem{Choudhry2006}A.~Choudhry, {\it Matrix analogues of the
tarry-Escott problem, multigrade chains and the equation of Fermat\/},
Math.~Student {\bf 75} (2006), no.~1-4, 215--224.

\bibitem{Choudhry}A.~Choudhry, J.~Wr\'oblewski, {\it Ideal Solutions of
the Tarry-Escott Problem of degree eleven with applications to Sums of
Thirteenth Powers,\/} Hardy-Ramanujan J. {\bf 31} (2008), 1--13.

\bibitem{Cipu}M.~Cipu, {\it Upper bounds for norms of products of
binomials\/}, LMS J.~Comput.~Math.~{\bf 7} (2004), 37--49.

\bibitem{Dickson}L.~E.~Dickson, {\it History of the Theory of
Numbers Vol. II\/}, Chelsea Publ. Co., New York, 1971.

\bibitem{MCA}J.~von zur Gathen and J.~Gerhard, {\it Modern Computer
Algebra, 2nd edition.\/} Cambridge University Press, London, 2003.

\bibitem{Gloden}A.~Gloden, {\it Mehrgradige Gleichungen\/}, Noordhoff,
Groningen, 1944.

\bibitem{Hernandez}S.~Hern\'andez and F.~Luca, {\it Integer Roots
Chromatic Polynomials of Non-Chordal Graphs and the Prouhet-Tarry-Escott
Problem\/}, Graphs and Combinatorics {\bf 21} (2005), 319--323.

\bibitem{Hua1982}L.~K.~Hua, {\it Introduction to Number Theory\/},
Springer-Verlag, New York, 1982.

\bibitem{Kleiman}H.~Kleiman, {\it A note on the Tarry-Escott
problem\/}, J. Reine Angew. Math. {\bf 278/279} (1975), 48--51

\bibitem{Maltby}R.~Maltby, {\it Pure product polynomials and the
Prouhet-Tarry-Escott problem\/}, Math.~Comp.~{\bf 66} (1997), 1323--1340

\bibitem{Melzak}Z.~A.~Melzak, {\it A note on the Tarry-Escott problem\/},
Canad.~Math.~Bull.~{\bf 4} (1961), 233--237.

\bibitem{Rees} E.~Rees and C.~Smyth, {\it On the Constant in the
Tarry-Escott Problem\/}, Lecture Notes in Mathematics 1415,
Springer, Berlin, 1990, 196--208.

\bibitem{Shuwen}C.~Shuwen, The Prouhet-Tarry-Escott
Problem. {\tt http://euler.free.fr/eslp/TarryPrb.htm}

\bibitem{Smyth}C.~J.~Smyth, {\it Ideal $9$th-Order Multigrades and
Letac's Elliptic Curve\/}, Math. Comp., {\bf 57}, (1991), 817-823.

\bibitem{Stewart}I.~Stewart, D.~Tall, {\it Algebraic Number Theory
and Fermat's Last Theorem,\/} AK Peters, Massachusetts, 2002.

\bibitem{Wright1934}E.~M.~Wright, {\it An easier Waring's problem\/},
J.~London Math.~Soc., {\bf 9} (1934), 267--272.

\bibitem{Wright1935}E.~M.~Wright, {\it On Tarry's problem (I)\/},
Quart.~J.~Math., Oxford Ser.~{\bf 6} (1935), 261--267.

\bibitem{Wright1959}E.~M.~Wright, {\it Prouhet's 1851 solution of the
Tarry-Escott problem\/}, Amer. Math. Monthly {\bf 66} (1959)
199--201.\end{thebibliography}

\end{document}